\newtheorem{theorem}{Theorem}[section]
\newtheorem{lemma}[theorem]{Lemma}
\newtheorem{proposition}[theorem]{Proposition}
\newtheorem{remark}[theorem]{Remark}
\def\bR{\mathbb R}
\def\bZ{\mathbb Z}
\def\bT{\mathbb T}
\def\bT{\mathbb{T}}
\def\md{\mathrm{d}}
\def\mK{\mathcal{K}}
\def\mI{\mathcal{I}}
\def\fK{\mathfrak{K}}
\def\fI{\mathfrak{I}}
\def\mM{\mathcal{M}}
\def\la{\lambda}
\def\t{\tilde}
\def\q{\quad}
\def\th{\theta}
\def\dl{\delta}
\def\Dl{\Delta}
\def\lt{\left}
\def\les{\lesssim}
\def\rt{\right}
\def\i{\infty}
\def\e{\epsilon}
\def \ls{\lesssim}
\def\p{\partial}
\def\f{\frac}
\def\na{\nabla}
\def\al{\alpha}
\def\O{\Omega}
\def\s{\sqrt}
\def\bl{\boldsymbol}
\def\cd{\cdot}
\def\nn{\nonumber}
\def\be{\begin{equation}}
\def\ee{\end{equation}}
\def\bes{\begin{equation*}}
\def\ees{\end{equation*}}
\def\bali{\begin{aligned}}
\def\eali{\end{aligned}}
\def\beas{\begin{eqnarray*}}
\def\eeas{\end{eqnarray*}}
\def\bs{\begin{split}}
\def\es{\end{split}}
\numberwithin{equation}{section}
\begin{document}
\title[Existence and non-uniqueness of stationary ASNS]
{Existence and non-uniqueness  of classical solutions to the axially symmetric stationary Navier-Stokes equations in an exterior cylinder}

\author{Zijin Li}
\address[Z. Li]{School of Mathematics and Statistics, Nanjing University of Information Science and Technology, Nanjing 210044, China, and Academy of Mathematics \& Systems Science, Chinese Academy of Sciences, Beijing 100190, China}
\email{zijinli@nuist.edu.cn}

\author{Xinghong Pan}
\address[X. Pan]{School of Mathematics and Key Laboratory of MIIT, Nanjing University of Aeronautics and Astronautics, Nanjing 211106, China}
\email{xinghong\_87@nuaa.edu.cn}


\subjclass[2020]{35Q30, 76D05.}

\keywords{stationary Navier-Stokes equations, exterior domain, existence, non-uniqueness.}

\thanks{Z. Li is supported by the China Postdoctoral Science Foundation (No. 2024M763474) and the National Natural Science Foundation of China (No. 12001285). X. Pan is supported by the National Natural Science Foundation of China (No. 12031006, No. 12471222).}

\maketitle

\begin{abstract}
  In this paper, we show existence and non-uniqueness on the axially symmetric stationary Navier-Stokes equations in an exterior periodic cylinder. On the boundary of the cylinder, the horizontally swirl velocity is subject to the perturbation of a rotation, the horizontally radial velocity is subject to the perturbation of an interior sink, while the vertical velocity is the perturbation of zero. At infinity, the flow stays at rest. We construct a solution to such problem, whose principal part admits a critical decay for the horizontal components and a supercritical decay for the vertical component of the velocity.

  This existence result is related to the 2D Stokes paradox and an open problem raised by V. I. Yudovich in [{\it Eleven great problems of mathematical hydrodynamics}, Mosc. Math. J. 3 (2003), no. 2, 711--737], where Problem 2 states that: {\em Show (spatially) global existence theorems for stationary and periodic flows.} Moreover, if the horizontally radial-sink velocity is relatively large ($\nu<-2$ in our setting), then the solution to this problem is non-unique.

\end{abstract}

\tableofcontents

\indent

\section{Introduction}

\subsection{Motivation}

\indent We consider the 3D stationary Navier-Stokes flow in an exterior  periodic cylinder $\O:=(\bR^2-B)\times\mathbb{T}$, where $B=\{x_h=(x_1,x_2)\in\mathbb{R}^2\,:\,x_1^2+x_2^2<1\}$ is the unit disk and $\bT=\{x_3\in\bR:x_3\in[0,2\pi[\}$ is the torus with $2\pi$ periodicity.
\begin{equation}\label{NS}
\left\{\begin{array}{ll}
-\Delta \bl{u}+\bl{u} \cdot \nabla \bl{u}+\nabla p=\bl{f}, & x\in\O\,,\\[1mm]
\nabla \cdot \bl{u}=0, & x\in\O\,,\\[1mm]
\bl{u}=\bl{u}_\ast, & x\in\p \O\,,\\[1mm]
\bl{u}=0,&  |x_h|\rightarrow+\i.
\end{array}
\rt.
\end{equation}
Here $\bl{u}(x)=(u_1,u_2,u_3):\O\rightarrow \bR^3$ is the unknown velocity of the fluid, while $p$ is the scalar pressure. $\bl{f}$ is the external force. $\bl{u}_\ast$ is the boundary value.

We show the existence result of system \eqref{NS} which was initially motivated by considering the existence problem of the 2D exterior-domain problem for system \eqref{NS} by ignoring the periodic variable $x_3$ and the vertical velocity component $u_3$ in an exterior domain, which is stated more clearly as follows. To find a solution to the following problem
\begin{equation}\label{2dns}
\left \{
\begin {array}{ll}
\bl{u}\cdot\na \bl{u}+\na p-\Dl \bl{u}=0, & \text{in }\bR^2-D\,,\\ [5pt]
\na\cdot\bl{u}=0, & \text{in }\bR^2-D\,,\\
\bl{u}\big|_{\p D}=\bl{a}_\ast,\\
\bl{u}\big|_{|x|\rightarrow+\i}=\eta\bl{e}_1,\\
\end{array}
\right.
\end{equation}
where $D$ is a smooth bounded domain and $\bl{a}_\ast$ is a smooth function defined on $\p D$. The existence problem of \eqref{2dns} catches the mathematicians' attention since the Stokes paradox, which show that the linear Stokes equation of \eqref{2dns} with $\bl{a}_\ast=\bl{0}$ and $\eta\neq 0$ has no solution. For the nonlinear Navier-Stokes system, such an existence problem of \eqref{2dns} with general $\bl{a}^\ast$ and $\eta$ was still an open problem and was listed  as one of the ``Eleven Great Problems in Mathematical Hydrodynamics" (Problem 2) by Yudovich in \cite{Yudovich:2003MMJ}.  Leray in \cite{Leray:1933JMPA} firstly developed the {\em invading domains method} to study this 2D existence problem. By using Leray's method, a $D$-solution (the solution with finite Dirichlet integration) satisfying \eqref{2dns}$_{1,2,3}$ and no flux condition $\int_{\p D} \bl{a}^\ast \cdot \bl{n} dS=0$ can be obtained in \cite{KorobkovPR:2020JDE}. However, whether this $D$-solution satisfies \eqref{2dns}$_4$ is unknown. Also, there is no result of $D$-solution if the flux of the flow is non-zero, even only satisfying \eqref{2dns}$_{1,2,3}$. The main difficulties of the 2D existence problem are the following: The lack of Sobolev embedding in two dimensions and the logarithmic growth of the Green tensor for the 2D Stokes system. Despite the above difficulties, under the assumptions that $|\bl{a}_\ast-\eta\bl{e}_1|$ is small, Finn and Smith in \cite{FinnS:1967ARMA} gave an existence result by using iteration techniques. Whether the Finn-Smith solution is a $D$-solution is still unknown. Recently Korobkov-Ren \cite{KorobkovR:2021ARMA, KorobkovR:2022JMPA} shows existence and uniqueness of $D$-solutions in the case that $\bl{a}_\ast=0$ and $\eta$ is small. See recent advances on this topic in \cite{KorobkovPR:2019ARMA,KorobkovPR:2021ADV} and references therein.

Based on the difficulties mentioned above for the existence of the 2D exterior problem, direct consideration of system \eqref{NS} with general data $\bl{f}$ and $\bl{u}_\ast$ is still far from being expected. We consider the data $\bl{f}$ and $\bl{u}_\ast$ are axially symmetric and show existence of the solution to system \eqref{NS} with the axial symmetric property which is intrinsic for the domain.

 Our problem in this paper will be mainly studied in the cylindrical coordinates $(r,\th,z)$, where
 \[
 r=\sqrt{x_1^2+x_2^2},\q \th=\arctan\f{x_2}{x_1},\q z=x_3\,.
 \]
We say a function $\bl{v}$ is axially symmetric if and only if
\[
\bl{v}=v_r(r,z)\bl{e_r}+v_\th(r,z)\bl{e_\th}+v_z(r,z)\bl{e_z}.
\]
Here the basis
\[
\bl{e_r}=\left(\frac{x_1}{r}, \frac{x_2}{r}, 0\right), \quad \bl{e_\theta}=\left(-\frac{x_2}{r}, \frac{x_1}{r}, 0\right), \quad \bl{e_z}=(0,0,1) .
\]
The boundary value of $\bl{u}_\ast$ is given as follows
\be\label{bvalue}
\bl{u}_\ast= \nu \bl{e}_r+ \mu\bl{e}_\th+ \bl{g}, \q \text{on } \p B\times \bT,
\ee
where $\nu<0$ and $\mu\in \bR$ are two constants to represent the interior sinks and rotations at the boundary. While $\bl{g}$  is axially symmetric perturbation function with suitable smoothness and small amplitude. Then system \ref{NS} with the boundary value \eqref{bvalue} is the following
\begin{equation}\label{ns0}
\left\{\begin{array}{ll}
-\Delta \bl{u}+\bl{u} \cdot \nabla \bl{u}+\nabla p=\bl{f}, & x\in\O\,,\\[1mm]
\nabla \cdot \bl{u}=0, & x\in\O\,,\\[1mm]
\bl{u}=(\nu+g_r(x_3))\bl{e}_r+(\mu+g_\th(x_3))\bl{e}_\th+g_z(x_3)\bl{e_z}, & x\in\p \O\,,\\[1mm]
\bl{u}=0,&  |x_h|\rightarrow+\i.
\end{array}
\rt.
\end{equation}
where $\bl{g}=g_r(z)\bl{e_r}+g_\th(z)\bl{e_\th}+g_z(z)\bl{e_z}$ is the axially symmetric representation of $\bl{g}$ on $\p B\times\bT$.
 Furthermore, by denoting the velocity and the external force in cylindrical coordinates by
\[
\bl{u}=u_r(r,z)\bl{e_r}+u_\th(r,z)\bl{e_\th}+u_z(r,z)\bl{e_z},\q \bl{f}=f_r(r,z)\bl{e_r}+f_\th(r,z)\bl{e_\th}+f_z(r,z)\bl{e_z}, \q\text{in } \O,
\]
then we can reformulate system  \eqref{ns0} as  follows
\be\label{ASNS}
\left\{\begin{split}
&(u_r\p_r+u_z\p_z) u_r-\frac{u_\theta^2}{r}+\partial_r p=\left(\Delta-\frac{1}{r^2}\right) u_r+f_r, \\
&(u_r\p_r+u_z\p_z)u_\theta+\frac{u_\theta u_r}{r}=\left(\Delta-\frac{1}{r^2}\right) u_\theta+f_\th, \\
&(u_r\p_r+u_z\p_z)u_z+\partial_z p=\Delta u_z+f_z, \\
&\partial_r u_r+\frac{u_r}{r}+\partial_z u_z=0\,\\
&(u_r,u_\th,u_z)(1,z)=\left(\nu+g_r(z),\mu+g_\th(z),g_z(z)\right),\\
&(u_r,u_\th,u_z)(+\i,z)=0.
\end{split}\right.\q\text{for}\q (r,z)\in]1,\i[\,\,\times\,\,\mathbb{T}\,,
\ee
Without loss of generality, we set $g_{r,0}:=\f{1}{2\pi}\int^{2\pi}_0 g(x_3)dx_3=0$. Or we can replace $\nu+g_r(x_3)$ by $\t{\nu}+\t{g}_r(x_3)$ with
\bes
\t{\nu}:=\nu+\f{1}{2\pi}\int^{2\pi}_0 g(x_3)dx_3,\q \t{g}_r(x_3):=g_r(x_3)-\f{1}{2\pi} \int^{2\pi}_0g(x_3)dx_3.
\ees

\subsection{Functional spaces and the main result}
Before stating the main theorem in this paper, we need to define the functional spaces where we work. First for a periodic function $h(z)\,:\,\mathbb{T}\to\mathbb{R}$, define its Fourier series by
\[
h(z)=\sum_{k\in\mathbb{Z}}h_k e^{ikz},
\]
where the sequence $\{h_k\}_{k\in\mathbb{Z}}$ is the set of its Fourier coefficients with
\[
h_k=\f{1}{2\pi}\int^{2\pi}_0 h(z) e^{-ik\th}\md z\,.
\]
 Also for a function $s(r):\, [1,+\i)\to\bR$, define its weighted $L^\i$ norm by
\[
\|s(\cd)\|_{L^\i_\zeta}:=\mathrm{esssup}_{r\in\,]1,\i[}\,r^\zeta |s(r)|\,.
\]

We first define the functional spaces for the external force $\bl{f}$ and the boundary value $\bl{g}$. Given constants $\la_\th>3$, $\la_z>2$, $\la>3/2$, for
\[
\bl{f}=f_r(r,z)\bl{e_r}+f_\th(r,z)\bl{e_\th}+f_z(r,z)\bl{e_z}\,:\,[1,\i[\,\times\mathbb{T}\to\mathbb{R}^3\,,
\]
we denote
\[
\begin{split}
\mathcal{E}_{\la_\th,\la_z,\la}&:=\left\{\bl{f}\,\,:\,\,\|\bl{f}\|_{\mathcal{E}_{\la_\th,\la_z,\la}}:=\|f_{\th,0}(r)\|_{L^\i_{\la_\th}}+\|f_{z,0}(r)\|_{L^\i_{\la_z}}+\sum_{k\neq0,j\in\{r,\th,z\}}\|f_{j,k}(r)\|_{L^\i_\la}<\i\right\}.\\
\end{split}
\]
Here we mention that there is no restriction on $f_{r,0}$ since we can absorb it in the zero mode of pressure. See Section \ref{sec4} below. Meanwhile, for the boundary value
\[
\bl{g}=g_r(z)\bl{e_r}+g_\th(z)\bl{e_\th}+g_z(z)\bl{e_z}\,:\,\mathbb{T}\to\mathbb{R}^3\,,
\]
we define
\[
\mathcal{V}:=\left\{\bl{g}\,:\,\mathbb{T}\to\mathbb{R}^3\big|\,g_{r,0}=0\,,\,\,\|\bl{g}\|_{\mathcal{V}}:=\sum_{k\in\mathbb{Z},j\in\{r,\th,z\}}(1+k^2)|g_{j,k}|<\i\right\}.
\]

Now we define the functional space for the velocity. For some $\tau>0$, we denote
\[
\mathcal{B}^0_{\tau}=\left\{\bl{v}_0=\lt(\mathrm{v}(r)+\f{\sigma}{r}{\bf{1}}_{-2\leq\nu<0}\rt)\bl{e_\th}+v_{z,0}(r)\bl{e_z}\,:\,\sum_{\ell\in\{0,1,2\}}\left(\|\mathrm{v}^{(2-\ell)}(r)\|_{L^\i_{3+\tau-\ell}}+\|v^{(2-\ell)}_{z,0}(r)\|_{L^\i_{2+\tau-\ell}}\right)<\i\right\}\,,
\]
which is designed for the zero mode of the reduced solution \footnote{That is, by subtracting the background solution $\f{\nu}{r}\bl{e_r}+\f{\mu}{r}\bl{e_\th}$ from the solution $\bl{u}$.}, where ${\bf{1}}_{-2\leq\nu<0}$ is the characteristic function on ${\nu\in[-2,0[}\,$, and $\sigma\in\mathbb{R}$. Meanwhile
\[
\mathcal{B}^{\neq}_{\tau}:=\left\{\bl{v}_\neq=\sum_{k\neq0,j\in\{r,\th,z\}}v_{j,k}(r)e^{ikz}\bl{e_j}\,:\,\mathrm{div}\,\bl{v}_\neq=0,\,\sum_{k\neq0,j\in\{r,\th,z\},\ell\in\{0,1,2\}}|k|^{2-\ell}\|v^{(\ell)}_{j,k}(r)\|_{L^\i_{3/2+\tau}}<\i\right\}\,,
\]
which is defined for the nonzero mode of the reduced solution. Finally, we define
\[
\mathcal{B}_{\tau}:=\mathcal{B}^0_{\tau}\bigoplus\mathcal{B}^{\neq}_{\tau}=\left\{\bl{v}=\bl{v}_0+\bl{v}_\neq: \bl{v}_0\in \mathcal{B}^0_{\tau},\, \bl{v}_\neq\in\mathcal{B}^{\neq}_{\tau} \right\}\,,
\]
and related norm is given by
\[
\begin{split}
\|\bl{v}\|_{{\mathcal{B}_{\tau}}}:=&|\sigma|{\bf1}_{-2\leq\nu<0}+\sum_{\ell\in\{0,1,2\}}\|\mathrm{v}^{(2-\ell)}(r)\|_{L^\i_{3+\tau-\ell}}+\sum_{k\neq0,\ell\in\{0,1,2\}}|k|^{2-l}\|v^{(\ell)}_{\th,k}(r)\|_{L^\i_{3/2+\tau}}\\
                        &+\sum_{\ell\in\{0,1,2\}}\|v^{(2-\ell)}_{z,0}(r)\|_{L^\i_{2+\tau-\ell}}+\sum_{k\neq0\atop j\in\{r,z\},\ell\in\{0,1,2\}}|k|^{2-\ell}\|v^{(l)}_{j,k}(r)\|_{L^\i_{3/2+\tau}}\,.
\end{split}
\]
Below is our main theorem:
\begin{theorem}\label{Main}
For fixed $\nu<0,\,\mu\in \bR$ and $\la_\th>3$, $\la_z>2$, $\la>3/2$, there exists $\e,\,\tau>0$ depending on the aforementioned constants, such that if
\[
\|\bl{f}\|_{\mathcal{E}_{\la_\th\,,\,\la_z\,,\,\la}}+\|\bl{g}\|_{\mathcal{V}}<\e\,,
\]
then the system \eqref{ns0} has a solution $\bl{u}$ such that
\[
\bl{u}=\f{\nu}{ r}\bl{e}_r+\f{{\mu}}{r}\bl{e}_\th+\bl{v}\,,
\]
where $\bl{v}\in{\mathcal{B}_{\tau}}\,$ satisfying
\[
\|\bl{v}\|_{{\mathcal{B}_{\tau}}}\leq C\left(\|\bl{f}\|_{\mathcal{E}_{\la_\th\,,\,\la_z\,,\,\la}}+\|\bl{g}\|_{\mathcal{V}}\right)< C\e,
\]
for some constants $C>0$, depending on $\mu,\,\nu,\,\la_\th,\,\la_z,\,\la$ mentioned above.

Moreover, if $\nu<-2$, we can construct infinite many classical solutions to system \eqref{ns0}.
\end{theorem}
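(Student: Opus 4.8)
The plan is to trace the non-uniqueness to the zero Fourier mode (in $z$) of the swirl, where for $\nu<-2$ the linearisation about the background $\frac{\nu}{r}\bl{e}_r+\frac{\mu}{r}\bl{e}_\theta$ becomes a Fredholm operator of positive index. The starting observation is structural: the zero mode of $\nabla\cdot\bl{u}=0$ reads $\partial_r u_{r,0}+u_{r,0}/r=0$ and forces $u_{r,0}=\nu/r$, so the coefficients of the zero-mode swirl equation are exactly those of the background, independently of all other unknowns. Averaging the $\bl{e}_\theta$-component of \eqref{ASNS} in $z$ gives
\[
\mathcal{L}_0u_{\theta,0}:=u_{\theta,0}''+\frac{1-\nu}{r}u_{\theta,0}'-\frac{1+\nu}{r^2}u_{\theta,0}=F_0,
\]
with $F_0$ collecting $f_{\theta,0}$ and the zero mode of the quadratic self-interactions of the nonzero modes. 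The indicial roots of $\mathcal{L}_0$ are $-1$ and $\nu+1$; for $\nu<-2$ both homogeneous solutions $r^{-1},\,r^{\nu+1}$ decay, and
\[
\psi(r):=r^{-1}-r^{\nu+1}
\]
solves $\mathcal{L}_0\psi=0$, vanishes at $r=1$, and decays at the critical rate $r^{-1}$.

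The decisive point is a Fredholm count for the problem $\mathcal{L}_0u=F$ on $[1,\infty)$ with $u(1)$ prescribed and $u\to0$. Since two independent solutions decay, after fixing a decaying particular solution one still has two constants to meet the single Dirichlet condition; the problem is thus solvable with a one-dimensional kernel $\mathrm{span}\,\psi$. Crucially there is no compensating solvability condition: the formal adjoint $\mathcal{L}_0^*$ has indicial roots $-\nu$ and $2$, both strictly positive for $\nu<-2$, so every adjoint homogeneous solution grows and the cokernel in the class of decaying functions is trivial. Hence $\mathcal{L}_0$ is onto with one-dimensional kernel. At the nonlinear level this is already visible for $\bl{f}=\bl{0},\,\bl{g}=\bl{0}$: for every $\lambda\in\bR$,
\[
\bl{u}_\lambda=\frac{\nu}{r}\bl{e}_r+\Big((\mu-\lambda)r^{-1}+\lambda r^{\nu+1}\Big)\bl{e}_\theta,
\]
with pressure from $\partial_r p=\nu^2/r^3+u_\theta^2/r$, is an exact classical solution of \eqref{ns0} with boundary swirl $\mu$, and $\partial_\lambda\bl{u}_\lambda=-\psi\,\bl{e}_\theta$.

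To treat small $\bl{f},\bl{g}$ I would show the family persists. Writing $\bl{u}=\frac{\nu}{r}\bl{e}_r+\frac{\mu}{r}\bl{e}_\theta+\lambda\psi\,\bl{e}_\theta+\bl{w}$ and linearising about the background (where the $w_r$-terms in the swirl equation cancel because $(\mu/r)'+(\mu/r)/r=0$), the operator has the clean kernel $\mathrm{span}\,\psi$ in its zero-mode swirl part and, by the preceding paragraph, trivial cokernel there. The term $\lambda\psi$ enters the other equations only through the pressure—its $O(\lambda^2)$ self-interaction and the zero mode of $u_\theta^2/r$—and through $O(\lambda)$ bilinear couplings that are all proportional to $w_r$ and hence supported on the nonzero modes. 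The equation for $\bl{w}$ is therefore an $O(\lambda)$-perturbation of the contraction used in the first part of the theorem, with the same external source; for each $\lambda$ in a bounded interval it admits a unique small $\bl{w}_\lambda$, depending continuously on $\lambda$. Because the $r^{-1}$-coefficient of the swirl equals $\mu-\lambda$ modulo a faster-decaying remainder, distinct $\lambda$ yield distinct far-field rotations and hence distinct solutions of \eqref{ns0}, giving the asserted infinite family.

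The main obstacle is that the linearisation is genuinely non-invertible: it carries the kernel direction $\psi$ exactly when $\nu<-2$, so the implicit function theorem is unavailable. What saves the construction—and is the crux of the argument—is the companion fact that the cokernel is trivial, which is precisely the positivity of the adjoint indicial roots $-\nu$ and $2$. This surjectivity makes the Lyapunov–Schmidt reduced equation vacuous, so carrying $\psi$ as an explicit free parameter and solving only the complementary (range) equation closes the scheme for every $\lambda$ in an interval. The threshold $\nu<-2$ is sharp for the mechanism, being exactly the condition $\nu+1<-1$ under which $\psi$ decays—at the critical rate $r^{-1}$—keeping the whole family inside the critical-decay class of the existence result.
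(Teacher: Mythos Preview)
Your proposal addresses only the non-uniqueness claim (the ``Moreover'' clause); the existence half of the theorem---which occupies the bulk of the paper (Sections~\ref{sec2}--\ref{sec3} and the contraction in Section~\ref{sec4})---is taken as a black box via your reference to ``the contraction used in the first part of the theorem.''  For the non-uniqueness, your argument is correct and identifies the same mechanism the paper uses, but the packaging differs.

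The paper's proof is a two-line re-application of the existence result: for $\nu<-2$ and any $\tilde\mu$ close to $\mu$, shift the background to $\frac{\nu}{r}\bl{e}_r+\frac{\tilde\mu}{r}\bl{e}_\th$, compensate the boundary data by $g_\th\mapsto g_\th+\mu-\tilde\mu$, and run the existence theorem again to obtain $\tilde{\bl{v}}\in\mathcal{B}_\tau$.  Since for $\nu<-2$ the space $\mathcal{B}_\tau$ carries no $r^{-1}$ swirl component, distinct $\tilde\mu$ give distinct far-field $r^{-1}$ coefficients and hence distinct solutions.  Your Lyapunov--Schmidt route is exactly this with $\lambda=\mu-\tilde\mu$: your kernel element $\psi=r^{-1}-r^{\nu+1}$ is the difference of the two decaying homogeneous solutions of $\mathcal{L}_0$, and splitting off $\lambda\psi$ while solving for the complement $\bl{w}$ amounts to shifting the background and re-running the contraction.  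Your Fredholm discussion (one-dimensional kernel, trivial cokernel because the adjoint indicial roots $-\nu$ and $2$ are positive) explains \emph{why} the paper's trick works and makes the threshold $\nu<-2$ transparent, at the cost of setting up a complementary projection and checking that the $O(\lambda)$-perturbed contraction closes uniformly in $\lambda$.  The paper avoids this entirely: by absorbing the $r^{-1}$ part of $\psi$ into the shifted background and the $r^{\nu+1}$ part into $\tilde{\bl{v}}\in\mathcal{B}_\tau$, the non-uniqueness reduces to the already-proved existence statement with nearby parameters, with no new analysis required.  In short, the paper's framing is shorter and self-contained; yours is structurally more illuminating but heavier to execute, and the cleanest way to make your $\bl{w}$-equation step rigorous is precisely the paper's reparametrisation.
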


$\hfill\square$

\begin{remark}
By the proof the main theorem, the index $\tau$ in the space of the solution could be chosen as
\[
\tau=\min\left\{{\la_\th}{\bf{1}}_{-2\leq\nu<0}+\min\left\{\la_\th\,,\,2-\f{\nu}{2}\right\}{\bf{1}}_{\nu<-2}-3\,,\,\min\left\{\la_z\,,\,2-\f{\nu}{2}\right\}-2\,,\,\la-\f{3}{2}\right\}\,.
\]
\end{remark}

$\hfill\square$

\begin{remark}
 Roughly speaking, our theorem \ref{Main} states that if the external force $\bl{f}\in C(\O)$ has small amplitude that satisfies suitable decay at spacial infinity, and the boundary value perturbation $\bl{g}\in C^2(\bT)$ is small enough, we can construct a $C^2(\O)$ solution to system \eqref{ns0}.
\end{remark}
$\hfill\square$
\begin{remark}
Actually, from the proof of Theorem \ref{Main} in the case of $\nu<-2$, we can choose $\t{\mu}$, which is different but sufficiently close to $\mu$, such that
\[
\t{\bl{u}}=\f{\nu}{ r}\bl{e}_r+\f{\t{\mu}}{r}\bl{e}_\th+\t{\bl{v}},\q \t{\bl{v}}\in \mathcal{B}_{\tau}\,,
\]
%
 is still a solution of system \eqref{ns0}. Since $\bl{v},\,\t{\bl{v}}\in\mathcal{B}_\tau$, we have
  \[
  v_\th\,,\,\t{v}_\th\in o(r^{-1})\,,\q\text{as}\q r\to\i\,.
  \]
Thus we find $\bl{u}$ and $\t{\bl{u}}$ are indeed two distinct solutions of \eqref{ns0} since their $r^{-1}$ coefficients are different. However, whether there is non-uniqueness for the problem \eqref{NS} with $\nu\geq-2$ is still unknown to the authors.

As far as the authors know, the only known nonuniqueness result to the stationary axially symmetric Navier-Stokes equations comes from Galid \cite [Page 596, Theorem IX. 2.2]{Galdi:2011}, where the author show that when considering the domain $\O=(B_{R_2}-B_{R_1})\times \bT$: There exist smooth $\bl{f}$, $\bl{u}_\ast$ and $\al$ such that the following system have at least two axially symmetric solutions.
\[
\left\{\begin{array}{ll}
-\al\Delta \bl{u}+\bl{u} \cdot \nabla \bl{u}+\nabla p=\bl{f}, & x\in\O\,,\\[1mm]
\nabla \cdot \bl{u}=0, & x\in\O\,,\\[1mm]
\bl{u}=\bl{u}_\ast, & x\in\p \O.
\end{array}
\rt.
\]
The idea of proof is inspired by Yudovich \cite{Yudovich:1967MATHSB}.
\end{remark}
$\hfill\square$
\begin{remark}
Mitsuo Higaki \cite{Higaki:ARXIV24} consider the  weak solution of the case $\nu<-2$, $\mu=0$, $\bl{g}=0$, and $\bl{f}=O(\f{1}{r^{3^+}})$. Our result improves the result in\cite{Higaki:ARXIV24} by:
(i) classical solutions; (ii) $\nu<0$, $\mu$ can be arbitrarily large; (iii) perturbation of boundary value; (iv) milder decay for $\bl{f}$; and (v) non-uniqueness for the case $\nu<-2$ as stated above.

\end{remark}

$\hfill\square$

\subsection{Strategy of proof to the main result}

\q\ There is an explicit solution $\f{\nu}{r} \bl{e}_r+\f{\mu}{r}\bl{e}_\th$ to system \eqref{ns0} if the external force $\bl{f}$ and the perturbation $\bl{g}$ are zero.  Such a solution is a rotation flow compounded with a sink flow, which is invariant under the natural scaling of the Navier-Stokes equations: $\bl{u}^{\al}(x):=\al\bl{u}(\al x)$, $\forall\al>0$. A scaling-invariant solution is called scale-critical, and it represents the balance between the nonlinear and linear parts of the equations. Given this nature, perturbation around a scaling-invariant solution may be complicated based on the scale of the perturbation. However, it is expected that the problem is well-posed if the perturbation is subcritical, which means the solution decays faster than order one of $r$ at spacial infinity. Based on such expectation, we construct a solution of the following type:
 \bes
 \bl{u}=\f{\nu}{r} \bl{e}_r+\f{\mu}{r}\bl{e}_\th+\bl{v},\q \text{with } |\bl{v}|=o\lt(r^{-1}\rt), \text{ as } r\rightarrow\i.
 \ees
By subtracting the scale-critical term
$
\f{\nu}{r}\bl{e_r}+\f{\mu}{r}\bl{e_\th}
$
from \eqref{ASNS}, the reduced solution $\bl{v}:=\bl{u}-\left(\f{\nu}{r}\bl{e_r}+\f{\mu}{r}\bl{e_\th}\right)$ satisfies
\be\label{ReduNS}
\left\{\begin{split}
&-\left(\p_r^2+\f{1-\nu}{r}\p_r-\f{1-\nu}{r^2}+\p_z^2\right)v_r+\p_r\pi=-\left({v}_r\p_r+{{v}_z}\p_z\right){v}_r+\f{{v}_\th^2}{r}+{\f{2\mu}{r^2}{v}_\th}+f_r\,,\\
& -\left(\p_r^2+\f{1-\nu}{r}\p_r-\f{1+\nu}{r^2}+\p_z^2\right)v_\th=-\left({v}_r\p_r+{{v}_z}\p_z\right){v}_\th-\f{{v}_r{v}_\th}{r}+f_\th\,,\\
&-\left(\p_r^2+\f{1-\nu}{r}\p_r+\p_z^2\right)v_z+\p_z\pi=-\left({v}_r\p_r+{{v}_z}\p_z\right){v}_z+f_z\,,\\
&\p_z(r v_z)+\p_r(rv_r)=0\,,\\
&v_r\big|_{r=1}={g}_r,\q v_\th\big|_{r=1}= g_\th,\q v_z\big|_{r=1}= g_z, \q \bl{v}\big|_{r\rightarrow+\i}=0\,,
\end{split}\right.
\ee
for $(r,z)\in]1,\i[\times\mathbb{T}$ .

The existence result of system \eqref{ReduNS} is based on the representation formula for the linearized system in Fourier mode. See \eqref{elinear}. In each Fourier mode of the linear system, solving for the horizontally swirl velocity component $v_\th$ is relatively straightforward, as its governing equation \eqref{elinear}$_2$ does not involve a pressure term. However, the situation becomes more subtle for the Fourier modes corresponding to the horizontally radial velocity component $v_r$  and the vertical component $v_z$, due to the presence of pressure. Nevertheless, for the Fourier zero mode of $v_r$ and $v_z$, it can be solved directly by using the equation of $v_{z,0}$ and the incompressibility. While for the Fourier non-zero mode of $v_r$ and $v_z$, we eliminate the pressure term by reformulating the problem in terms of the vorticity equation, and then recover $v_{r,k}$ and $v_{z,k}$ for the axially symmetric Biot-Savart law.

After the Fourier zero and non-zero mode of the velocity $\bl{v}$ are represented by solving the corresponding ODE,  suitable estimates for the solution of the linear system are then given. In this process, we need to apply some basic properties of the modified Bessel function, which reveal that the non-zero mode of the velocity shares the same angular regularity as the boundary value and exhibits the same decay rate as the external force. Since the kernel of the ODE for the zero mode is of Euler type, the decay rate of the zero mode is two orders weaker than that of the external force. This explains why we impose the decay conditions  $\la_\th>3$ for the swirl component of the external force $f_{\th,0}$ and $\la_z>2$ for the vortical external force $f_{z,0}$. Once appropriate estimates of the linear system are established, the standard technique of contraction mapping can be applied to demonstrate the solvability of the nonlinear system \eqref{ReduNS} within our designated functional spaces, provided the data are sufficiently small. In order to close the non-linear estimates, the decay rate of the non-zero mode of the external force must be set to at least $\la>3/2$. However, deriving the estimates for the linear Fourier modes requires tedious and complex computations, and it is the most demanding part in our paper. See Section \ref{sec2} and Section \ref{sec3}.

\subsection{Related works, organization and notations of the paper}

 Now we present some related works. For the two-dimensional exterior system \eqref{2dns}, Hillairet and Wittwer \cite{Hillairet2013} consider the perturbation of system \eqref{2dns} around the scale-critical solution $\f{\mu}{r}\bl{e}_\th$ in an exterior disk for zero flux ($\nu=0$) and zero external force.  When $|\mu|>\s{48}$, the linearized equations can produce a subcritical error solution, then they show the existence of solutions in the form of $\bl{u}=\mu\bl{e}_\th+o(|x|^{-1})$ when $|x|\rightarrow+\i$. Some other references for the flow with zero flux ($\nu=0$), non-perturbed boundary condition and non-zero external force can be found in Higaki \cite{HigakiMN:2018ARMA,GallagherHM:2019MATHNA}.

  Recently Higaki \cite{Higaki2023} consider the external force perturbation effect of system \eqref{2dns} with $\nu<-2$, in which the boundary condition is not perturbed. Later \cite{GH2024} and \cite{LP2024} give improvements to the above-mentioned results by allowing the scale-critical flow  $\f{\nu}{r}\bl{e}_r+\f{\mu}{r}\bl{e}_\th$ that produce a stabilizing effect to the spatial decay when $\mu,\nu$ satisfy some suitable constraints. Also the non-uniqueness result are given in the above two paper \cite{GH2024} and \cite{LP2024} for the case $\nu<-2$.

Study on the axially symmetric Navier-Stokes equations have always been a hot topic. For the unsteady axially symmetric Navier-Stokes equations in $\mathbb{R}^3$, the regularity problem of solutions with a swirl component has drawn significant attention. This is particularly true since Ladyzhenskaya \cite{Lady:1968} and Ukhovskii-Itdovich \cite{UI:1968} independently proved in 1968 that weak solutions are regular for all time in the absence of swirl. Now it is still an open problem. See some process in the last 20 years in \cite{ChenSYT:2008IMRN,ChenSYT:2009CPDE,KochNSS:2009ACTAMATH,LeiZ:2011JFA,Pan:2016JDE,Wei:2016JMAA,ChenFZ:2017DCDS,LeiZ:2017PJM,LiPYZZZ:2024JFA} and references therein. For the stationary axially symmetric NS, many literatures has been devoted in studying the Liouville theorems and asymptotic behavior of the solution at spacial infinity in various domains. See for instance \cite{Wang:2019JDE,SereginW:2019AIA,CarrilloPZ:2020JFA,CarrilloPZZ:2020ARMA,PanL:2020NARWA,KozonoTW:2022JFA,KozonoTW:2023JDE,LiP:2023CCM,LiPY:2025CVPDE}.

Our paper is organized as follows. In Section \ref{sec2}, we formulate the linearized system of \eqref{ReduNS} and deduce its each Fourier mode. Then we solve and estimate the zero mode directly from the equations. In Section \ref{sec3}, we solve and estimate the non-zero modes for $v_\th$ directly and for $(v_r,v_z)$ by applying the equations of the stream function and vorticity. At last, by estimate the nonlinear term and using contract mapping principle, we show the existence of solutions to the nonlinear system in Section \ref{sec4}.

Throughout the paper, $C_{a,b,...}$ denotes a positive constant depending on $a,\,b,\,...$, which may be different from line to line. $A\lesssim_{a,b,...} B$ means $A\leq C_{a,b,...}B$. Finally, $A\thickapprox_{a,b,...} B$ means both $A\lesssim_{a,b,...} B$ and $B\lesssim_{a,b,...} A$ are satisfied.
\section{Reformulation in the Fourier mode and the zero-mode estimates}\label{sec2}
In this section, we consider the linearized system of the reduced axially symmetric Navier-Stokes system \eqref{ReduNS} and formulate it into the Fourier mode. Then the solvability and estimation of the zero mode given. The linearized system for \eqref{ReduNS} is given by
\be\label{PoNSvL00}
\left\{\begin{split}
&-\left(\p_r^2+\f{1-{\nu}}{r}\p_r-\f{1-{\nu}}{r^2}+\p_z^2\right)v_r+\p_r\pi=\bar{f}_r\,,\\[1mm]
&-\left(\p_r^2+\f{1-{\nu}}{r}\p_r-\f{1+{\nu}}{r^2}+\p_z^2\right)v_\th=\bar{f}_\th\,,\\[1mm]
&-\left(\p_r^2+\f{1-{\nu}}{r}\p_r+\p_z^2\right)v_z+\p_z\pi=\bar{f}_z\,,\\[1mm]
&r\p_z v_z+\p_r(rv_r)=0\,,\\
&v_r\big|_{r=1}={g}_r(z),\q v_\th\big|_{r=1}= g_\th(z),\q v_z\big|_{r=1}= g_z(z),\q \bl{v}\big|_{r\rightarrow+\i}=0\,.
\end{split}\right.
\ee
Here $\bl{v}:=v_r(r,z)\bl{e}_r+v_\th(r,z)\bl{e}_\th+v_z(r,z)\bl{e}_z$ is the linearized axisymmetric velocity field. To recover the nonlinear system \eqref{ReduNS}, we just choose
\[
\bali
&\bar{f}_r=-\left({v}_r\p_r+{{v}_z}\p_z\right){v}_r+\f{{v}_\th^2}{r}+{\f{2\mu}{r^2}{v}_\th}+f_r,\\
&\bar{f}_\th=-\left({v}_r\p_r+{{v}_z}\p_z\right){v}_\th-\f{{v}_r{v}_\th}{r}+f_\th,\\
&\bar{f}_z=-\left({v}_r\p_r+{{v}_z}\p_z\right){v}_z+f_z.
\eali
\]

Applying the Fourier series technique, i.e.
\[
v_r(r,z):=\sum_{k\in\mathbb{Z}}v_{r,k}(r)e^{ikz}\,,\q\text{where}\q v_{r,k}(r)=\f{1}{2\pi}\int_0^{2\pi}v_r(r,\th)e^{-ikz}\md z\,,
\]
and similarly for $v_\th$, $v_z$ and the other functions, we rewrite \eqref{PoNSvL00} in each $k$-mode for any $k\in\mathbb{Z}$ :
\be\label{elinear}
\left\{\begin{aligned}
&-\left(\f{\md^2}{\md r^2}+\f{1-{\nu}}{r}\f{\md}{\md r}-\f{1-{\nu}}{r^2}-k^2\right)v_{r,k}+\f{\md}{\md r}\pi_k=\bar{f}_{r,k}\,,&\,\text{for}\q r\in]1,\i[\,,\\[1mm]
&-\left(\f{\md^2}{\md r^2}+\f{1-{\nu}}{r}\f{\md}{\md r}-\f{1+\nu}{r^2}-k^2\right)v_{\th,k}=\bar{f}_{\th,k}\,,&\text{for}\q r\in]1,\i[\,,\\[1mm]
&-\left(\f{\md^2}{\md r^2}+\f{1-{\nu}}{r}\f{\md}{\md r}-k^2\right)v_{z,k}+ik\pi_k=\bar{f}_{z,k}\,,&\text{for}\q r\in]1,\i[\,,\\[1mm]
&ikv_{z,k}+\f{\md}{\md r}v_{r,k}+\f{v_{r,k}}{r}=0\,,&\text{for}\q r\in]1,\i[\,,\\[1mm]
&v_{r,k}(1)=g_{r,k}\,,\q v_{\th,k}(1)=g_{\th,k}\,\q v_z(1)=g_{z,k}\,,\q v_{r,k}(+\i)=v_{\th,k}(+\i)&=v_{z,k}(+\i)=0.
\end{aligned}\right.
\ee

In the following, we first consider the zero mode for $k=0$, since there is no pressure term, which is not hard to solve. From \eqref{elinear}$_{2,3,4,5}$ for $k=0$, we see that
\be\label{elinearzeromode}
\left\{\begin{aligned}
&-\left(\f{\md^2}{\md r^2}+\f{1-{\nu}}{r}\f{\md}{\md r}-\f{1+\nu}{r^2}\right)v_{\th,0}=\bar{f}_{\th,0}\,,&\q\text{for}\q r\in]1,\i[\,,\\[1mm]
&-\left(\f{\md^2}{\md r^2}+\f{1-{\nu}}{r}\f{\md}{\md r}\right)v_{z,0}=\bar{f}_{z,0}\,,&\q\text{for}\q r\in]1,\i[\,,\\[1mm]
&\f{\md}{\md r}v_{r,0}+\f{v_{r,0}}{r}=0\,,&\q\text{for}\q r\in]1,\i[\,,\\[1mm]
&(v_{r,0},v_{\th,0},v_{z,0})(1)=(g_{r,0},g_{\th,0},g_{z,0})\,,\q (v_{r,0},v_{\th,0},v_{z,0})(+\i)=0.&
\end{aligned}\right.
\ee

\subsection{The zero mode of $\bl{v_\th}$ and its estimate}

The zero mode $v_{\th,0}$ satisfies the following ODE,
\be\label{PoNSvLF0}
\left\{\begin{split}
&-\left(\f{\md^2}{\md r^2}+\f{1-\nu}{r}\f{\md}{\md r}-\f{1+\nu}{r^2}\right)v_{\th,0}=\bar{f}_{\th,0}\,,\q\q\q\q\q\text{for}\q r\in]1,\i[\,,\\[1mm]
&v_{\th,0}(1)=g_{\th,0}\,\q v_{\th,0}(+\i)=0.
\end{split}\right.
\ee
This is a boundary value problem of Eulerian ODE. We have the following proposition for it.
\begin{proposition}\label{Prop1}
Given $\bar{f}_{\th,0}\in L^\i_{\la_\th}$ with $\la_\th>3$. The boundary value problem \eqref{PoNSvLF0} has a unique solution with the form
\[
v_{\th,0}(r)=\mathrm{v}(r)+{\bf1}_{-2\leq\nu<0}\f{\sigma}{r}\,,
\]
where $\sigma\in\bR$ and
\[
\mathrm{v}(r)=o(r^{-1})\,\q\text{as}\q r\to\i\,.
\]
And it that satisfies the estimate
\be\label{Mainvth0}
\|{\mathrm{v}}''\|_{L^\i_{\tilde{\la_\th}}}+\|{\mathrm{v}}'\|_{L^\i_{\tilde{\la_\th}-1}}+\|{\mathrm{v}}\|_{L^\i_{\tilde{\la_\th}-2}}+{\bf1}_{-2\leq\nu<0}|\sigma|\les_{\nu,\t{\la_\th}}\|\bar{f}_{\th,0}\|_{L^\i_{\t{\la_\th}}}+|g_{\th,0}|\,.
\ee
Here $\t{\la_\th}\in\,]3,\bar{\la_\th}]$ , where
\be\label{lath}
\bar{\la_\th}:=\left\{
\begin{aligned}
&\min\left\{\la_\th\,,\,2-\f{\nu}{2}\right\},\q&\text{for}\q \nu<-2\,;\\
&\la_\th\q&\text{for}\q \nu\geq-2\,.\\
\end{aligned}
\right.
\ee
\end{proposition}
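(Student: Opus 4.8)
The plan is to regard \eqref{PoNSvLF0} as an inhomogeneous Euler equation on $]1,\i[$, to solve it explicitly by variation of parameters, and then to read off both the leading $r^{-1}$ behaviour and the weighted bounds. Write $L[v]:=-\big(v''+\f{1-\nu}{r}v'-\f{1+\nu}{r^2}v\big)$ for the operator in \eqref{PoNSvLF0}. Substituting $r^{\al}$ into $L[v]=0$ gives the indicial equation $\al^2-\nu\al-(1+\nu)=(\al+1)(\al-\nu-1)=0$, so the fundamental pair is $\phi_1(r)=r^{-1}$ and $\phi_2(r)=r^{\nu+1}$, with Wronskian $W(r)=(\nu+2)r^{\nu-1}$; at the degenerate value $\nu=-2$ the two roots coincide and $\phi_2$ is replaced by $r^{-1}\log r$. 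The same computation gives the resonance identity
\[
L[r^{\al}]=-(\al+1)(\al-\nu-1)\,r^{\al-2},
\]
which governs everything that follows: a forcing of size $r^{-\la}$ produces a response of size $r^{2-\la}$ with amplitude proportional to $\big[(3-\la)(1-\la-\nu)\big]^{-1}$, so the resonant weights $\la=3$ and $\la=1-\nu$ must be avoided. This is exactly why $\la_\th>3$ is assumed, and why for $\nu<-2$ (where $1-\nu>3$) the admissible weight must stay below $1-\nu$; the threshold $\bar{\la_\th}=2-\f{\nu}{2}$ in \eqref{lath} is the midpoint of $]3,1-\nu[$ and keeps the factor $1-\t{\la_\th}-\nu$ bounded away from $0$.

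Next I would build a particular solution by variation of parameters. For $\nu<-2$ one may take
\[
v_p(r)=\phi_1(r)\int_{r}^{\i}\f{\phi_2(s)\,\bar f_{\th,0}(s)}{W(s)}\,\md s-\phi_2(r)\int_{1}^{r}\f{\phi_1(s)\,\bar f_{\th,0}(s)}{W(s)}\,\md s,
\]
while for $-2\le\nu<0$ the two legs are swapped, so that the $\phi_2$-tail is integrated from $r$ to $\i$ (killing the slow mode) and the $\phi_1$-leg from $1$ to $r$ (which supplies the $r^{-1}$ head); in each regime the legs are chosen so that $v_p$ is finite at $r=1$, decays at infinity, and produces no spurious slow tail. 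Using $|\bar f_{\th,0}(s)|\les\|\bar f_{\th,0}\|_{L^\i_{\t{\la_\th}}}s^{-\t{\la_\th}}$ the integrands become pure powers, both integrals are elementary, and one gets $|v_p(r)|\les r^{2-\t{\la_\th}}=o(r^{-1})$; convergence of the $\int_r^{\i}$ leg at infinity is precisely where $\t{\la_\th}<1-\nu$ is used. Differentiating the formula (the boundary terms cancel through $W$) controls $\mathrm v'$, and reading the second derivative off the equation itself,
\[
\mathrm v''=-\f{1-\nu}{r}\mathrm v'+\f{1+\nu}{r^2}\mathrm v-\bar f_{\th,0},
\]
then upgrades this to the full bound \eqref{Mainvth0} for $\mathrm v,\mathrm v',\mathrm v''$, with constants depending on $\nu,\t{\la_\th}$ only through the resonance denominators.

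It remains to fit the boundary datum and to peel off the $\sigma/r$ head, and the case split on $\nu$ here is the main obstacle. The admissible solution is $v_p+c_1\phi_1+c_2\phi_2$, and the homogeneous content is dictated jointly by $v_{\th,0}(\i)=0$ and the prescribed decay of $\mathrm v$: for $-1\le\nu<0$ the mode $\phi_2$ grows (or is constant) and is removed by decay; for $-2<\nu<-1$ it decays but slower than $r^{-1}$, so it is removed by the requirement $\mathrm v=o(r^{-1})$; at $\nu=-2$ the same requirement deletes the $r^{-1}\log r$ mode. Thus for all $-2\le\nu<0$ the only surviving slow mode is $\phi_1=r^{-1}$, whose coefficient is fixed by $v_{\th,0}(1)=g_{\th,0}$ as $\sigma=g_{\th,0}-\mathrm v(1)$, giving the announced form and $|\sigma|\les\|\bar f_{\th,0}\|_{L^\i_{\t{\la_\th}}}+|g_{\th,0}|$. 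For $\nu<-2$ both $\phi_1,\phi_2$ decay, but now $o(r^{-1})$ forces $c_1=0$ (hence no $\sigma/r$ term), after which $c_2$ is pinned by the datum; the surviving tail $r^{\nu+1}$ lies in $L^\i_{\t{\la_\th}-2}$ exactly because $\t{\la_\th}-2\le-\f{\nu}{2}\le-\nu-1$. Uniqueness is the same argument run on a difference of two solutions: it is homogeneous, vanishes at $r=1$, and is compatible with the decay, which forces $c_1=c_2=0$. The genuinely delicate part is the bookkeeping—keeping the four regimes $-1\le\nu<0$, $-2<\nu<-1$, $\nu=-2$, and $\nu<-2$ consistent so that the integration legs, the surviving homogeneous mode, and the target weight $\t{\la_\th}-2$ all match up.
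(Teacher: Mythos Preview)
Your approach is essentially the paper's: identify the Euler pair $r^{-1},\,r^{\nu+1}$, solve by variation of parameters with legs chosen so that the slow mode is absorbed into the boundary datum, and then read $\mathrm v''$ off the equation. The case split $\nu<-2$ versus $-2\le\nu<0$, the extraction of the $\sigma/r$ head, and the role of the resonance constraint $\t{\la_\th}<1-\nu$ are all handled the same way; your observation that $2-\f{\nu}{2}$ is the midpoint of $]3,1-\nu[$ is a nice gloss the paper does not make explicit.

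There is one genuine difference worth flagging. For $-2\le\nu<0$ the paper does not use the raw variation-of-parameters formula but the iterated integral
\[
\mathrm v(r)=-\f{1}{r}\int_r^{\i}s^{\nu+1}\int_s^{\i}t^{-\nu}\bar f_{\th,0}(t)\,\md t\,\md s,
\]
which is nonsingular across $\nu=-2$ and yields $r^{\t{\la_\th}-2}|\mathrm v(r)|\les\big[(\nu+\t{\la_\th}-1)(\t{\la_\th}-3)\big]^{-1}\|\bar f_{\th,0}\|_{L^\i_{\t{\la_\th}}}$ directly. Your scheme at $\nu=-2$ uses $\phi_2=r^{-1}\log r$, and if you bound the two legs separately you pick up a stray $\log r$: the $\phi_2$-leg is $r^{-1}\log r\int_r^{\i}s^2\bar f_{\th,0}\,\md s=O(r^{2-\t{\la_\th}}\log r)$, so $r^{\t{\la_\th}-2}|\mathrm v|$ is only $O(\log r)$. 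The two legs actually cancel the logarithms (an integration by parts recovers the paper's double integral), but your sketch does not indicate this cancellation, and without it the estimate at the closed endpoint $\t{\la_\th}=\bar{\la_\th}=\la_\th$ fails. For the main theorem this is harmless since only some $\t{\la_\th}>3$ is needed, but for the proposition as stated you should either adopt the double-integral representation or explicitly exhibit the $\log$ cancellation at $\nu=-2$.
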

\begin{proof}
Two linearly independent solutions of the homogeneous equation
\be\label{Homth}
-\left(\f{\md^2}{\md r^2}+\f{1-\nu}{r}\f{\md}{\md r}-\f{1+\nu}{r^2}\right)\tilde{v}_{\th,0}=0
\ee
are $r^{1+\nu}$ and $r^{-1}$.

\noindent {\bf\noindent The case of $\bl{\nu<-2}$.}

The ODE \eqref{PoNSvLF0} with boundary condition $v_{\th,0}(1)=g_{\th,0}$ has the following solution decaying as $O(r^{-1^{+}})$ at spacial infinity:
\be\label{v0<}
\begin{aligned}
v_{\theta, 0}(r)=&-\frac{1}{\nu+2}\left\{r^{\nu+1} \int_1^r s^{-\nu} \bar{f}_{\theta, 0}(s) \mathrm{d} s+r^{-1} \int_r^{\infty} s^2 \bar{f}_{\theta, 0}(s) \mathrm{d} s\right\}\\
                 &+\left(g_{\th,0}+\frac{1}{\nu+2}\int_1^{\infty} s^2 \bar{f}_{\theta, 0}(s) \mathrm{d} s\right)r^{\nu+1}\,.
\end{aligned}
\ee
Recall \eqref{lath}, clearly it satisfies
\[
\tilde{\la_\th}\leq\la_\th,\q\text{and}\q\tilde{\la_\th}\in]3,1-\nu[\,.
\]
Direct calculation from \eqref{v0<} indicates
\be\label{Evth0}
\begin{split}
|r^{{\tilde{\la_\th}}-2}v_{\th,0}(r)|&\leq-\f{\|\bar{f}_{\th,0}\|_{L^\i_{\tilde{\la_\th}}}}{\nu+2}\left(r^{\tilde{\la_\th}+\nu-1}\int_1^rs^{-\nu-\tilde{\la_\th}}\mathrm{d} s+r^{\tilde{\la_\th}-3} \int_r^{\infty} s^{2-\tilde{\la_\th}}\mathrm{d} s+r^{\tilde{\la_\th}+\nu-1}\int_1^\i s^{2-\tilde{\la_\th}}\mathrm{d}s\right)\\[2mm]
&\hskip .5cm+|g_{\th,0}|r^{\tilde{\la_\th}+\nu-1}\\[2mm]
&=-\f{\|\bar{f}_{\th,0}\|_{L^\i_{\tilde{\la_\th}}}}{\nu+2}\left(\f{1}{-\nu-\tilde{\la_\th}+1}(1-r^{\nu+\tilde{\la_\th}-1})+\f{1}{\tilde{\la_\th}-3}(1+r^{\tilde{\la_\th}+\nu-1})\right)+|g_{\th,0}|r^{\nu+\tilde{\la_\th}-1}\\
&\les_{\nu,\tilde{\la_\th}}\|\bar{f}_{\th,0}\|_{L^\i_{{\tilde{\la_\th}}}}+|g_{\th,0}|\,.
\end{split}
\ee
Meanwhile, direct calculation shows
\[
\begin{split}
\f{\md}{\md r}v_{\theta, 0}(r)=&-\f{1}{\nu+2}\left\{(\nu+1)r^\nu\int_1^r s^{-\nu}\bar{f}_{\th,0}(s)\md s-r^{-2}\int_r^\i s^2\bar{f}_{\th,0}(s)\md s\right\}\\
&+(\nu+1)\left(g_{\th,0}+\frac{1}{\nu+2}\int_1^{\infty} s^2 \bar{f}_{\theta, 0}(s) \mathrm{d} s\right)r^{\nu}\,.
\end{split}
\]
And similarly as one derives \eqref{Evth0} and using \eqref{PoNSvLF0}, we have
\be\label{vthe0first}
|r^{\tilde{\la_\th}-1}v'_{\th,0}(r)|+|r^{\tilde{\la_\th}}{v}''_{\th,0}(r)|\les_{\nu,\tilde{\la_\th}}\|\bar{f}_{\th,0}\|_{L^\i_{{\tilde{\la_\th}}}}+|g_{\th,0}|\,.
\ee
Combining \eqref{Evth0} and \eqref{vthe0first}, we see that
\be\label{vthe0first00}
\|{v}''_{\th,0}\|_{L^\i_{\tilde{\la_\th}}}+\|v'_{\th,0}\|_{L^\i_{\tilde{\la_\th}-1}}+\|v_{\th,0}\|_{L^\i_{\tilde{\la_\th}-2}}\les_{\nu,\tilde{\la_\th}}\|\bar{f}_{\th,0}\|_{L^\i_{{\tilde{\la_\th}}}}+|g_{\th,0}|\,.
\ee
This completes the estimates of $v_{\th,0}$ when $\nu<-2$.

{\bf\noindent The case of $\bl{-2\leq\nu<0}$.}

 The function $r^{\nu+1}$ (or $\log r$ when $\nu=-2$) decays slower than the prescribed request, one cannot solve the ODE of $v_{\th,0}$ to get a solution decays faster than $r^{-1}$ for any given $g_{\th,0}$. Instead, one has the following exact solution of \eqref{PoNSvLF0}
\be\label{v0>}
{v}_{\th,0}(r)=-\f{1}{r}\int_r^\i s^{\nu+1}\int_s^\i t^{-\nu}\bar{f}_{\th,0}(t)\mathrm{d}t\mathrm{d}s+\f{\sigma}{r}:=\mathrm{v}(r)+\f{\sigma}{r},
\ee
where
\be\label{vth0thi001}
\sigma=\int_1^\i s^{\nu+1}\int_s^\i t^{-\nu}\bar{f}_{\th,0}(t)\mathrm{d}t\mathrm{d}s+g_{\th,0}\,.
\ee
Owing to $\bar{f}_{\th,0}\in L^\i_{\la_\th}$, we infer from \eqref{v0>} that
\be\label{vth0sec}
\begin{split}
|r^{\tilde{\la_\th}-2}\mathrm{v}(r)|&\leq r^{\tilde{\la_\th}-3}\|\bar{f}_{\th,0}\|_{L^\i_{\tilde{\la_\th}}}\int_r^\i s^{\nu+1}\int_s^\i t^{-\nu-\tilde{\la_\th}}\mathrm{d}t\mathrm{d}s\leq\f{1}{(\nu+\tilde{\la_\th}-1)(\tilde{\la_\th}-3)}\|\bar{f}_{\th,0}\|_{L^\i_{{\tilde{\la_\th}}}}\,.\\
\end{split}
\ee
Meanwhile, since
\[
\f{\md}{\md r}\mathrm{v}(r)=\f{1}{r^2}\int_r^\i s^{\nu+1}\int_s^\i t^{-\nu}\bar{f}_{\th,0}(t)\mathrm{d}t\mathrm{d}s+r^\nu\int_r^\i t^{-\nu}\bar{f}_{\th,0}(t)\mathrm{d}t\,,
\]
one derives
\be\label{vth0thi}
|r^{\tilde{\la_\th}-1}\mathrm{v}'(r)|\les_{\nu,\tilde{\la_\th}}\|\bar{f}_{\th,0}\|_{L^\i_{{\tilde{\la_\th}}}}\,.
\ee
similarly as achieving the estimate \eqref{vth0sec}. Noticing that $\f{\sigma}{r}$ is an exact solution of \eqref{Homth}, one deduces from \eqref{PoNSvLF0}$_1$ that:
\be\label{Evth0123}
|r^{\tilde{\la_\th}}\mathrm{v}''|\les_\nu|r^{\tilde{\la_\th}-1}\mathrm{v}'|+|r^{\tilde{\la_\th}-2}\mathrm{v}|+|r^{\tilde{\la_\th}}\bar{f}_{\th,0}|\,.
\ee
Combining \eqref{vth0thi001}, \eqref{vth0sec}, \eqref{vth0thi} and \eqref{Evth0123}, we can obtain that
\be\label{vth0firsta}
\|{\mathrm{v}}''\|_{L^\i_{\tilde{\la_\th}}}+\|{\mathrm{v}}'\|_{L^\i_{\tilde{\la_\th}-1}}+\|{\mathrm{v}}\|_{L^\i_{\tilde{\la_\th}-2}}+|\sigma|\les_{\nu,\tilde{\la_\th}}\|\bar{f}_{\th,0}\|_{L^\i_{{\tilde{\la_\th}}}}+|g_{\th,0}|\,.
\ee
Combining \eqref{vthe0first00} and \eqref{vth0firsta}, we conclude \eqref{Mainvth0}. This completes the proof of the proposition.
\end{proof}

\subsection{The zero modes of $\bl{v_r}$ and $\bl{v_z}$ and their estimates}
For the zero modes of $v_r$ and $v_z$, we go back to the velocity equations \eqref{elinearzeromode}$_{2,3}$ to get
\be\label{vrz0}
\left\{\begin{aligned}
&-\left(\f{\md^2}{\md r^2}+\f{1-{\nu}}{r}\f{\md}{\md r}\right)v_{z,0}=\bar{f}_{z,0}\,,\q&\text{for}\q r\in]1,\i[\,,\\[1mm]
&\f{\md}{\md r}v_{r,0}+\f{v_{r,0}}{r}=0\,,\q&\text{for}\q r\in]1,\i[\,,\\[1mm]
&v_{r,0}(1)=g_{r,0}=0\,\q v_{z,0}(1)=g_{z,0}\,,\q v_{r,0}(+\i)=v_{z,0}(+\i)=0\,.&
\end{aligned}\right.
\ee
Here is the main result of the subsection:
\begin{proposition}\label{Prop4}
Given $\bar{f}_{z,0}\in L^\i_{\la_z}$ with $\la_z>2$. The boundary value problem \eqref{vrz0} has the unique solution:
\be\label{svrz0}
\bali
&v_{r,0}(r)=0\,\\
&v_{z,0}(r)=\left(g_{z,0}-\nu^{-1}\int_1^\i \bar{f}_{z,0}(s)s\md s\right)r^\nu+\nu^{-1}r^\nu\int_1^r\bar{f}_{z,0}(s)s^{-\nu+1}\md s+\nu^{-1}\int_r^\i \bar{f}_{z,0}(s)s\md s\,.
\eali
\ee
And $v_{z,0}$ satisfies the estimate
\be\label{vze0firsta}
\|{v}''_{z,0}\|_{L^\i_{\t{\la_z}}}+\|v'_{z,0}\|_{L^\i_{\t{\la_z}-1}}+\|v_{z,0}\|_{L^\i_{\t{\la_z}-2}}\les_{\nu,\t{\la_z}}\|\bar{f}_{z,0}\|_{L^\i_{\t{\la_z}}}+|g_{z,0}|\,.
\ee
for any $\t{\la_z}\in\,]2,\bar{\la_z}]$ . Here
\be\label{laz}
\bar{\la_z}=\min\left\{\la_z\,,\,2-\f{\nu}{2}\right\}\,.
\ee
\end{proposition}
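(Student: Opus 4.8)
The plan is to exploit the fact that the two equations in \eqref{vrz0} decouple, and to treat them in turn. The incompressibility relation \eqref{vrz0}$_2$ reads $\f{\md}{\md r}(rv_{r,0})=0$, so $rv_{r,0}$ is constant along $]1,\i[$; the boundary value $v_{r,0}(1)=g_{r,0}=0$ (recall this normalization was imposed in the introduction) forces that constant to be zero, giving $v_{r,0}\equiv0$ as in \eqref{svrz0}. The far-field condition $v_{r,0}(+\i)=0$ is then automatically consistent, so $v_{r,0}$ requires no further analysis.

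For $v_{z,0}$ I would first record the two homogeneous solutions of $v''+\f{1-\nu}{r}v'=0$, namely $r^\nu$ and the constant $1$; since $\nu<0$ only $r^\nu$ decays, and the role of the condition $v_{z,0}(+\i)=0$ will be to eliminate the constant mode. To build a particular solution I would put \eqref{vrz0}$_1$ into the divergence form $\f{\md}{\md r}\big(r^{1-\nu}\,\f{\md}{\md r}v_{z,0}\big)=-r^{1-\nu}\bar{f}_{z,0}$ (multiplying by the integrating factor $r^{1-\nu}$), integrate once to recover $v'_{z,0}$ up to one constant, and integrate a second time. Imposing $v_{z,0}(+\i)=0$—which is where the convergence of $\int_1^\i s\,\bar{f}_{z,0}(s)\,\md s$, guaranteed by $\la_z>2$, is used—and then $v_{z,0}(1)=g_{z,0}$ pins down both constants and produces the representation \eqref{svrz0}$_2$; I would then verify the two boundary values directly from the closed form as a check.

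The estimate \eqref{vze0firsta} then follows by the same weighted bookkeeping as in Proposition \ref{Prop1}. Using $|\bar{f}_{z,0}(s)|\le\|\bar{f}_{z,0}\|_{L^\i_{\t{\la_z}}}s^{-\t{\la_z}}$, I would split \eqref{svrz0}$_2$ into its three summands and bound $|r^{\t{\la_z}-2}v_{z,0}(r)|$: the coefficient $C_1:=g_{z,0}-\nu^{-1}\int_1^\i s\,\bar{f}_{z,0}(s)\,\md s$ of $r^\nu$ is controlled by $|g_{z,0}|+C\|\bar{f}_{z,0}\|_{L^\i_{\t{\la_z}}}$ since $\int_1^\i s^{1-\t{\la_z}}\md s<\i$ (here $\t{\la_z}>2$ is essential), while the two integral terms reduce to elementary integrals $\int_1^r s^{1-\nu-\t{\la_z}}\md s$ and $\int_r^\i s^{1-\t{\la_z}}\md s$ whose products with the prefactors $r^{\t{\la_z}+\nu-2}$ and $r^{\t{\la_z}-2}$ are uniformly bounded on $[1,\i[$. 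For the first derivative I would differentiate \eqref{svrz0}$_2$ and observe that the two boundary terms carrying $\bar{f}_{z,0}(r)$ cancel, leaving an expression of the form $\nu C_1 r^{\nu-1}+r^{\nu-1}\int_1^r s^{1-\nu}\bar{f}_{z,0}(s)\,\md s$; its weighted bound in $L^\i_{\t{\la_z}-1}$ follows from the same two elementary integrals. Finally, rather than differentiating once more, I would read the second derivative off the ODE itself, $v''_{z,0}=-\f{1-\nu}{r}v'_{z,0}-\bar{f}_{z,0}$, so that $\|v''_{z,0}\|_{L^\i_{\t{\la_z}}}\les_{\nu,\t{\la_z}}\|v'_{z,0}\|_{L^\i_{\t{\la_z}-1}}+\|\bar{f}_{z,0}\|_{L^\i_{\t{\la_z}}}$, which closes \eqref{vze0firsta}.

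The one genuine subtlety—the place that forces the precise cutoff $\bar{\la_z}=\min\{\la_z,2-\nu/2\}$—is the behaviour of the exponent $2-\nu-\t{\la_z}$ appearing in $\int_1^r s^{1-\nu-\t{\la_z}}\md s$. For the product $r^{\t{\la_z}+\nu-2}\int_1^r s^{1-\nu-\t{\la_z}}\md s$ to stay uniformly bounded one needs $2-\nu-\t{\la_z}>0$, and in fact bounded away from $0$: at the borderline $\t{\la_z}=2-\nu$ the integral becomes $\log r$ and the product grows like $\log r$, destroying the estimate. The restriction $\t{\la_z}\le2-\nu/2$ supplies exactly the margin $2-\nu-\t{\la_z}\ge-\nu/2>0$, so the hidden constant is at worst $2/|\nu|$ and depends only on $\nu,\t{\la_z}$. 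This is the $v_z$-counterpart of the constraint $\t{\la_\th}<1-\nu$ used in Proposition \ref{Prop1}, and apart from it the argument is routine.
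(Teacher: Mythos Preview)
Your proposal is correct and follows essentially the same route as the paper: solve $v_{r,0}$ from the divergence relation, write the Euler-type ODE for $v_{z,0}$ with fundamental solutions $r^\nu$ and $1$, obtain the explicit representation \eqref{svrz0} by variation of constants (your integrating-factor formulation is the same computation), then bound the three pieces with the elementary integrals $\int_1^r s^{1-\nu-\t{\la_z}}\md s$ and $\int_r^\i s^{1-\t{\la_z}}\md s$, differentiate once (with the cancellation you note) for $v'_{z,0}$, and read $v''_{z,0}$ off the equation. Your discussion of why the cutoff $2-\nu/2$ is needed to keep $2-\nu-\t{\la_z}$ bounded away from zero is exactly the point that drives \eqref{laz}.
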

\begin{proof}
The equation \eqref{vrz0}$_2$ indicates
\[
v_{r,0}=\f{C}{r}\,.
\]
And the constant $C$ must be zero owing to the boundary condition.
Moreover, akin to \eqref{v0<}, the function $v_{z,0}$ is solved as
\be\label{Evz0}
v_{z,0}(r)=\bar{v}_0r^\nu+\nu^{-1}r^\nu\int_1^r\bar{f}_{z,0}(s)s^{-\nu+1}\md s+\nu^{-1}\int_r^\i \bar{f}_{z,0}(s)s\md s\,.
\ee
Recall the boundary condition in \eqref{vrz0}$_3$, we have
\[
g_{z,0}=\bar{v}_0+\nu^{-1}\int_1^\i \bar{f}_{z,0}(s)s\md s\,,
\]
which indicates
\[
\bar{v}_0=g_{z,0}-\nu^{-1}\int_1^\i \bar{f}_{z,0}(s)s\md s\,.
\]
This proves \eqref{svrz0}. Noticing that $\la_z>2$, direct calculation shows
\[
|\bar{v}_0|\les_{\nu,{\tilde{\la_z}}}|g_{z,0}|+\|\bar{f}_{z,0}\|_{L^\i_{{\tilde{\la_z}}}}\,.
\]
Recalling \eqref{laz}, it is clear that $\tilde{\la_z}\in]2,2-\nu[$ . Direct calculation shows
\be\label{Evz01}
\begin{split}
|r^{\tilde{\la_z}-2}v_{z,0}(r)|&\leq-\f{\|\bar{f}_{z,0}\|_{L^\i_{\tilde{\la_z}}}}{\nu}\left(r^{\tilde{\la_z}+\nu-2}\int_1^rs^{-\nu+1-\tilde{\la_z}}\mathrm{d} s+r^{\tilde{\la_z}-2} \int_r^{\infty} s^{1-\tilde{\la_z}}\mathrm{d} s\right)+\left(|g_{z,0}|+\|\bar{f}_{z,0}\|_{L^\i_{\tilde{\la_z}}}\right)r^{\tilde{\la_z}+\nu-2}\\[2mm]
&=-\f{\|\bar{f}_{z,0}\|_{L^\i_{\t{\la_z}}}}{\nu}\left(\f{1}{-\nu-\tilde{\la_z}+2}(1-r^{\nu+\tilde{\la_z}-2})+\f{1}{\tilde{\la_z}-2}\right)+\left(|g_{z,0}|+\|\bar{f}_{z,0}\|_{L^\i_{\t{\la_z}}}\right)r^{\tilde{\la_z}+\nu-2}\\
&\les_{\nu,{\tilde{\la_z}}}\|\bar{f}_{z,0}\|_{L^\i_{{{\tilde{\la_z}}}}}+|g_{z,0}|\,.
\end{split}
\ee
Meanwhile, taking the derivative of \eqref{Evz0}, one deduces
\[
\begin{split}
\f{\md}{\md r}v_{z, 0}(r)=&\nu\bar{v}_0r^{\nu-1}+r^{\nu-1}\int_1^r\bar{f}_{z,0}(s)s^{-\nu+1}\md s\,.
\end{split}
\]
And using a similarly method as one derives \eqref{Evz01} and using \eqref{vrz0}, we conclude that
\be\label{vze0first}
|r^{{\tilde{\la_z}}-1}v'_{z,0}(r)|+|r^{{\tilde{\la_z}}}{v}''_{z,0}(r)|\les_{\nu,{\tilde{\la_z}}}\|\bar{f}_{z,0}\|_{L^\i_{{\tilde{\la_z}}}}+|g_{z,0}|\,.
\ee
Combining \eqref{Evz01} and \eqref{vze0first}, we derive \eqref{vze0firsta}. This completes the proof of Proposition \ref{Prop4}.
\end{proof}

\section{The non-zero Fourier modes and their estimates}\label{sec3}

\subsection{The non-zero mode of $\bl{v_\th}$  and its estimate}
From \eqref{elinear}$_2$, in this subsection, we consider the boundary value problem of $k$ mode of $v_\th$ for $k\neq 0$.
\be\label{PoNSvLFk}
\left\{\begin{split}
&-\left(\frac{\md^2}{\md r^2}+\frac{1-\nu}{r} \frac{\md}{\md r}-\frac{1+\nu}{r^2}-k^2\right) v_{\th,k}=\bar{f}_{\th,k}\,,\q\q\q\q\q\text{for}\q r\in(1,\i)\,,\\[1mm]
&v_{\th,k}(1)=g_{\th,k}\,\q v_{\th,k}(+\i)=0.
\end{split}\right.
\ee
 We have the following result:
\begin{proposition}\label{Prop2}
For $k\in\mathbb{Z}-\{0\}$, given $\bar{f}_{\th,k}\in L^\i_{\la}$ with $\la>3/2$. The boundary value problem \eqref{PoNSvLFk} has a unique solution
\[
v_{\th,k}(r)=o(r^{-1})\,\q\text{as}\q r\to\i\,.
\]
And it satisfies the estimate
\be\label{Mainvthk}
\|v_{\th,k}''\|_{L^\i_{\t{\la}}}+|k|\cd\|v_{\th,k}'\|_{L^\i_{\t{\la}}}+k^2\|v_{\th,k}\|_{L^\i_{\t{\la}}}\les_{\nu,\t{\la}}\|\bar{f}_{\th,k}\|_{L^\i_{\t{\la}}}+k^2|g_{\th,k}|\,
\ee
for any $\t{\la}\in\,]\f{3}{2},\la]$ .
\end{proposition}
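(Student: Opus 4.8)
The plan is to solve the boundary value problem \eqref{PoNSvLFk} explicitly via the method of variation of parameters, using an appropriate fundamental system for the homogeneous operator, and then to extract the claimed weighted decay estimate \eqref{Mainvthk} from the resulting integral representation. The first step is to identify two linearly independent solutions of the homogeneous equation
\[
-\left(\frac{\md^2}{\md r^2}+\frac{1-\nu}{r}\frac{\md}{\md r}-\frac{1+\nu}{r^2}-k^2\right)\tilde{v}_{\th,k}=0\,.
\]
Because of the $-k^2$ term, this is no longer Eulerian (as it was in the zero-mode case \eqref{Homth}); instead, after the substitution $\tilde v_{\th,k}(r)=r^{\nu/2}w(kr)$ one recognizes a modified Bessel equation, so the two homogeneous solutions can be written as $r^{\nu/2}I_{\alpha}(|k|r)$ and $r^{\nu/2}K_{\alpha}(|k|r)$ for a suitable order $\alpha=\alpha(\nu)$. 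The growing solution $I_\alpha$ and the decaying solution $K_\alpha$ play the roles analogous to $r^{1+\nu}$ and $r^{-1}$ in Proposition \ref{Prop1}: one selects the combination that decays at $r\to\infty$ and matches the boundary datum $g_{\th,k}$ at $r=1$.

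Next I would write the solution by the Green's function / variation-of-parameters formula built from this fundamental system, with the Wronskian normalizing the kernel. The homogeneous part carries the boundary value $g_{\th,k}$ through the decaying solution normalized at $r=1$, and the particular part is a double integral against $\bar f_{\th,k}$. The crux of the estimate is then to control this representation in the weighted norm $L^\i_{\t\la}$ uniformly in $k$, with the correct powers of $|k|$ appearing on the derivatives: one factor of $|k|$ for $v_{\th,k}'$ and two for $v_{\th,k}$, matching the elliptic scaling. This is where the asymptotics of the modified Bessel functions enter decisively — the large-argument behavior $I_\alpha(x)\sim e^{x}/\sqrt{2\pi x}$, $K_\alpha(x)\sim e^{-x}\sqrt{\pi/2x}$ together with the constancy of the Wronskian $I_\alpha K_\alpha' - I_\alpha' K_\alpha = -1/x$ give the exponential localization of the kernel near the diagonal $r\approx s$, so that integrating $\bar f_{\th,k}\in L^\i_{\t\la}$ reproduces the weight $r^{-\t\la}$ without loss. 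The small-argument behavior near $r=1$ controls the boundary contribution and forces the condition $\t\la>3/2$, analogously to how $\la_\th>3$ arose in the Eulerian zero mode.

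The main obstacle I anticipate is establishing the Bessel-function estimates \emph{uniformly in $k$} with the precise powers $|k|^{2-\ell}$ on the $\ell$-th derivative. Because the argument of the Bessel functions is $|k|r$, naive bounds lose powers of $|k|$; one must exploit the exponential decay of the kernel away from the diagonal and track how the rescaling $x=|k|r$ interacts with the polynomial weight $r^{\t\la}$. Concretely, after the substitution the weight becomes $|k|^{\t\la}x^{\t\la}$, and the factors of $|k|$ must be rebalanced so that exactly two powers are available to be distributed among the two derivatives; this bookkeeping, combined with separately handling the regions $|k|r\lesssim 1$ and $|k|r\gtrsim 1$, is the technically demanding core. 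Once the representation is controlled in the $v_{\th,k}$ and $v_{\th,k}'$ norms, the bound on $v_{\th,k}''$ follows immediately by solving the ODE \eqref{PoNSvLFk} algebraically for the second derivative and inserting the already-obtained estimates together with $\bar f_{\th,k}\in L^\i_{\t\la}$, exactly as in \eqref{Evth0123}. Uniqueness follows from the fact that any homogeneous solution decaying at infinity and vanishing at $r=1$ must be identically zero, since the only decaying homogeneous solution is a multiple of $r^{\nu/2}K_\alpha(|k|r)$, which does not vanish at $r=1$.
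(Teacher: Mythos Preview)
Your proposal is correct and matches the paper's approach almost exactly: the substitution $r^{\nu/2}g(|k|r)$ reducing to a modified Bessel equation of order $|1+\nu/2|$, the variation-of-parameters representation built from $r^{\nu/2}I_{|1+\nu/2|}(|k|r)$ and $r^{\nu/2}K_{|1+\nu/2|}(|k|r)$, the pointwise Bessel asymptotics to control the kernel, and recovering $v_{\th,k}''$ algebraically from the ODE. Two small simplifications relative to what you anticipate: since $|k|\geq 1$ and $r\geq 1$ the argument $|k|r$ is always $\geq 1$, so only the large-argument Bessel asymptotics are needed and no splitting into $|k|r\lesssim 1$ versus $|k|r\gtrsim 1$ is required; and the restriction $\tilde\lambda>3/2$ is not forced by this linear proposition (where $\tilde\lambda>1$ would already give $o(r^{-1})$) but is imposed only for the later nonlinear closure.
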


Different from \eqref{Homth}, the homogeneous equation of \eqref{PoNSvLFk}$_1$:
\be\label{BE1}
-\left(\frac{\md^2}{\md r^2}+\frac{1-\nu}{r} \frac{\md}{\md r}-\frac{1+\nu}{r^2}-k^2\right) \tilde{v}_{\th,k}(r)=0
\ee
with $k\neq 0$ is no longer of the Eulerian type. Nevertheless, by rewriting
\be\label{Cgv}
\tilde{v}_{\th,k}(r)=r^{\f{\nu}{2}}g(|k|r)\,,
\ee
direct calculation shows $g$ satisfies
\[
-g''-\f{1}{r}g'+\Big(1+\f{\left(1+\f{\nu}{2}\right)^2}{r^2}\Big)g=0\,.
\]
This is \emph{the modified Bessel equation} which has two linearly independent solutions
\[
\left\{
\begin{split}
&I_{\left|1+\f{\nu}{2}\right|}(r)=\sum_{m=0}^{\infty} \frac{1}{m!\Gamma\left(m+\left|1+\f{\nu}{2}\right|+1\right)}\left(\frac{r}{2}\right)^{2 m+\left|1+\f{\nu}{2}\right|}\,;\\[2mm]
&K_{\left|1+\f{\nu}{2}\right|}(r)=\frac{\pi}{2} \frac{I_{-\left|1+\f{\nu}{2}\right|}(r)-I_{\left|1+\f{\nu}{2}\right|}(r)}{\sin \left(\left|1+\f{\nu}{2}\right| \pi\right)}\,.
\end{split}
\right.
\]
Here, $I_{\left|1+\f{\nu}{2}\right|}(r)$ and $K_{\left|1+\f{\nu}{2}\right|}(r)$ are modified Bessel functions of the first and the second kind with index $\left|1+\f{\nu}{2}\right|$, respectively. They  are positive for all $r>0$. Meanwhile, for $r\geq 1 $,
\be\label{bessel}
I_\al(r) \thickapprox_\al r^{-\f{1}{2}r}e^r,\q \text{ and }\q K_\al(r) \thickapprox_\al r^{-\f{1}{2}}e^{-r}.
\ee
We refer readers to \cite[page 377-378]{AS1992} for details. Recall \eqref{Cgv}, we denote
\be\label{FUnc}
\mathcal{K}_k(r):=r^{\f{\nu}{2}}K_{\left|1+\f{\nu}{2}\right|}(|k|r)\,,\q \mathcal{I}_k(r):=r^{\f{\nu}{2}}I_{\left|1+\f{\nu}{2}\right|}(|k|r)\,,
\ee
for the convenience.

To prove Proposition \ref{Prop2}, the following basic estimates of the prescribed function $\mK_k$ and $\mI_k$ are required:
\begin{lemma}\label{lemKI}
The linearly independent fundamental solutions $\mK_k(r)$ and $\mI_k(r)$ satisfy the following estimates
\begin{align}
&\f{\md^n}{\md r^n}\mK_k(r)\les_\nu |k|^{-\f{1}{2}+n}r^{\f{\nu-1}{2}}e^{-|k|r}, \q \f{\md^n}{\md r^n}\mI_k(r)\les_\nu|k|^{-\f{1}{2}+n}r^{\f{\nu-1}{2}}e^{|k|r} \label{bessel1}\,,
\end{align}
for $n=0,\,1,\,2$.
\end{lemma}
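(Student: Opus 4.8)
The plan is to deduce everything from the large-argument asymptotics \eqref{bessel} together with the standard recurrence relations for modified Bessel functions. Throughout, write $\al:=\left|1+\f{\nu}{2}\right|$, so that by \eqref{FUnc} we have $\mK_k(r)=r^{\nu/2}K_{\al}(|k|r)$ and $\mI_k(r)=r^{\nu/2}I_{\al}(|k|r)$. The key observation is that for $k\in\bZ-\{0\}$ and $r\geq1$ the argument $s:=|k|r$ obeys $s\geq1$, so \eqref{bessel} is available; crucially, it is available for \emph{any} fixed order, in particular for the shifted orders $\al\pm1$ and $\al\pm2$, because the implied constants are permitted to depend on that (fixed) order and on $\nu$.

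First I would establish the scalar derivative bounds
\[
\Big|\tfrac{\md^n}{\md s^n}K_{\al}(s)\Big|\les_\nu s^{-1/2}e^{-s},\qquad \Big|\tfrac{\md^n}{\md s^n}I_{\al}(s)\Big|\les_\nu s^{-1/2}e^{s},\qquad n=0,1,2,\q s\geq1.
\]
The case $n=0$ is exactly \eqref{bessel}. For $n=1,2$ I would use the recurrences $K_{\al}'=-\tfrac12(K_{\al-1}+K_{\al+1})$ and $I_{\al}'=\tfrac12(I_{\al-1}+I_{\al+1})$ (and their once-more-differentiated versions, which express $K_{\al}''$ and $I_{\al}''$ as combinations of orders $\al,\al\pm2$); alternatively one may invoke the modified Bessel ODE to write $K_{\al}''=\big(1+\al^2 s^{-2}\big)K_{\al}-s^{-1}K_{\al}'$. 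Either way every derivative is a finite combination of modified Bessel functions of nearby order, each controlled by \eqref{bessel} with $s\geq1$, which yields the display. The identity $K_{-\al}=K_{\al}$ and the order-independence of the exponential asymptotics make any negative shifted index harmless.

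It then remains to differentiate the products $r^{\nu/2}K_{\al}(|k|r)$ and $r^{\nu/2}I_{\al}(|k|r)$ via the Leibniz and chain rules. The $n$-th derivative is a sum, over $0\le m\le n$, of terms proportional to $r^{\nu/2-(n-m)}\,|k|^{m}\,K_{\al}^{(m)}(|k|r)$ (and likewise for $I$), the factor $|k|^{m}$ arising from the chain rule. Inserting the scalar bounds above with $s=|k|r$ controls each such term by
\[
|k|^{m}\,r^{\nu/2-(n-m)}\,(|k|r)^{-1/2}e^{-|k|r}=|k|^{\,m-\f12}\,r^{\f{\nu-1}{2}-(n-m)}\,e^{-|k|r}.
\]
Finally I would absorb the surplus powers using $r\geq1$ and $|k|\geq1$: since $n-m\geq0$ we have $r^{-(n-m)}\leq1$, and since $m\leq n$ we have $|k|^{m-n}\leq1$, so every term is bounded by $|k|^{\,n-\f12}r^{\f{\nu-1}{2}}e^{-|k|r}$, which is precisely \eqref{bessel1} for $\mK_k$; the argument for $\mI_k$ is identical with $e^{-|k|r}$ replaced by $e^{|k|r}$.

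The computation is essentially bookkeeping, so there is no deep obstacle; the one point requiring genuine care is checking that the exponential factor is preserved under differentiation. This is guaranteed because differentiating a modified Bessel function returns modified Bessel functions of neighbouring order rather than new transcendental functions, so the uniform bound \eqref{bessel} keeps applying at each stage; the remaining algebraic powers of $r$ and $|k|$ only improve the estimate thanks to $r,|k|\geq1$.
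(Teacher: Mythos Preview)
Your proof is correct and follows essentially the same approach as the paper: both arguments rest on the asymptotics \eqref{bessel} combined with the standard recurrence identities expressing $K_\al'$ and $I_\al'$ in terms of Bessel functions of neighbouring order, together with $r\geq1$ and $|k|\geq1$. The only difference is organizational---you first isolate the scalar bounds $|K_\al^{(n)}(s)|\les_\nu s^{-1/2}e^{-s}$ and then apply Leibniz to the product $r^{\nu/2}K_\al(|k|r)$, whereas the paper differentiates the product directly using the recurrence \eqref{BP111} (equivalently $K_\al'(x)=\tfrac{\al}{x}K_\al(x)-K_{\al+1}(x)$) and bounds the resulting explicit expression; the underlying ingredients are identical.
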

\begin{proof}
The case of $n=0$ in \eqref{bessel1} is a direct consequence of \eqref{bessel}. For $n=1$, by using the property of the modified Bessel function
\be\label{BP111}
\f{\md}{\md x}K_\al(x)=\f{\al}{x}K_\al(x)-K_{1+\al}(x)\,,
\ee
 (see \cite[page 376]{AS1992}) for instants.) we first have
\begin{align}
\mK^{\prime}_k(r)=&\f{\nu}{2}r^{\f{\nu}{2}-1} K_{|1+\f{\nu}{2}|}(|k|r)+|k|r^{\f{\nu}{2}}K^\prime_{|1+\f{\nu}{2}|}(|k|r)\nn\\[1mm]
=&\f{\nu}{2}r^{\f{\nu}{2}-1} K_{|1+\f{\nu}{2}|}(|k|r)+|k|r^{\f{\nu}{2}}\lt( \f{|1+\f{\nu}{2}|}{|k|r}K_{|1+\f{\nu}{2}|}(|k|r)-K_{1+|1+\f{\nu}{2}|}(|k|r) \rt)\nn\\[1mm]
=&\lt(\f{\nu}{2}+|1+\f{\nu}{2}|\rt)r^{\f{\nu}{2}-1} K_{|1+\f{\nu}{2}|}(|k|r)-|k|r^{\f{\nu}{2}}K_{1+|1+\f{\nu}{2}|}(|k|r)\,.\label{bessel5}
\end{align}
Then again using \eqref{bessel}, we see that
\[
|\mK^{\prime}_k(r)|\ls_\nu |k| r^{\f{\nu}{2}} (|k|r)^{-\f{1}{2}}e^{-|k|r}\ls_\nu |k|^{\f{1}{2}}r^{\f{\nu-1}{2}}e^{-|k|r}.
\]
Taking derivative of \eqref{bessel5} again and using \eqref{BP111} and \eqref{bessel}, we can also obtain
\[
|\mK^{\prime\prime}_k(r)|\ls_\nu |k|^2 r^{\f{\nu}{2}} (|k|r)^{-\f{1}{2}}e^{-|k|r}\ls_\nu |k|^{\f{3}{2}}r^{\f{\nu-1}{2}}e^{-|k|r}.
\]
This proves the first estimate in \eqref{bessel1}. Also, by using (see \cite[page 376]{AS1992}) for instants.)
 \be\label{BP222}
 \f{\md}{\md x}I_\al(x)=\f{\al}{x}I_\al(x)+I_{1+\al}(x)\,,
 \ee
 we derive that
\begin{align}
\mI^{\prime}_k(r)=&\f{\nu}{2}r^{\f{\nu}{2}-1} I_{|1+\f{\nu}{2}|}(|k|r)+|k|r^{\f{\nu}{2}}I^\prime_{|1+\f{\nu}{2}|}(|k|r)\nn\\
=&\f{\nu}{2}r^{\f{\nu}{2}-1} I_{|1+\f{\nu}{2}|}(|k|r)+|k|r^{\f{\nu}{2}}\lt( \f{|1+\f{\nu}{2}|}{|k|r}I_{|1+\f{\nu}{2}|}(|k|r)+I_{1+|1+\f{\nu}{2}|}(|k|r) \rt)\nn\\
=&\lt(\f{\nu}{2}+|1+\f{\nu}{2}|\rt)r^{\f{\nu}{2}-1} I_{|1+\f{\nu}{2}|}(|k|r)+|k|r^{\f{\nu}{2}}I_{1+|1+\f{\nu}{2}|}(|k|r)\,. \label{bessel6}
\end{align}
Then again using \eqref{bessel}, we see that
\[
|\mI^{\prime}_k(r)|\ls_\nu |k| r^{\f{\nu}{2}} (|k|r)^{-\f{1}{2}}e^{|k|r}\ls_\nu |k|^{\f{1}{2}}r^{\f{\nu-1}{2}}e^{|k|r}\,.
\]
Taking derivative of \eqref{bessel6} again and using \eqref{BP222} and \eqref{bessel}, we can similarly obtain
\[
|\mI^{\prime\prime}_k(r)|\ls_\nu |k|^2 r^{\f{\nu}{2}} (|k|r)^{-\f{1}{2}}e^{|k|r}\ls_\nu |k|^{\f{3}{2}}r^{\f{\nu-1}{2}}e^{|k|r}\,.
\]
This finishes the proof of the lemma.

\end{proof}

Since we intend to solve a subcritically decayed solution such that insist $v_{\th,k}(r) = o(r^{-1})$ as $r\to\i$, the general solution of \eqref{elinear}$_1$ can be represented by for $k\neq 0$,
\be\label{subn0}
\begin{split}
v_{\th,k}(r)=&\bar{v}_{k} \mK_k(r)+\mK_k(r)\int_1^r \bar{f}_{\th,k}(s)s^{1-\nu}\mI_k(s)\md s+\mI_k(r)\int_r^{\i}\bar{f}_{\th,k}(s)s^{1-\nu}\mK_k(s)\md s.
\end{split}
\ee
Recall the boundary condition \eqref{elinear}$_4$, we arrive at
\[
g_{\th,k}=v_{\th,k}(1)=\bar{v}_k\mK_k(1)+I_k(1)\int_1^\i \bar{f}_{\th,k} (s)s^{1-\nu}\mK_k(s)\md s\,.
\]
Thus one can solve the constant $\bar{v}_k$:
\be\label{linearvth1}
\begin{aligned}
\bar{v}_{k} & =\frac{1}{\mK_k(1)}\left(g_{\theta, k}-\mI_k(1) \int_1^{\infty} \bar{f}_{\th,k} (s) s^{1-\nu} \mK_k(s) \md s\right) \\
& =\frac{1}{K_{\left|1+\frac{v}{2}\right|}(|k|)}\left(g_{\theta, k}-I_{\left|1+\frac{v}{2}\right|}(|k|) \int_1^{\infty} \bar{f}_{\th,k} (s) s^{1-\nu} \mK_k(s) \md s\right)\,.
\end{aligned}
\ee
Substituting \eqref{linearvth1} in \eqref{subn0}, one concludes tha for $k\in\bZ-\{0\}$,
\be\label{linearvth}
\begin{aligned}
v_{\th, k}(r)=&\frac{\mK_k(r) }{K_{\left|1+\frac{\nu}{2}\right|}(|k|)}\left(g_{\theta, k}-I_{\left|1+\frac{\nu}{2}\right|}(|k|) \int_1^{\infty} \bar{f}_{\th,k} (s) s^{1-\nu} \mK_k(s) \md s\right) \\
& +\mK_k(r)\int_1^r\bar{f}_{\th,k} (s)s^{1-\nu}\mI_k(s)\md s+\mI_k(r)\int_r^{\i}\bar{f}_{\th,k} (s)s^{1-\nu}\mK_k(s)\md s.
\end{aligned}
\ee

{
For the further estimates of the $v_{\th,k}$, the following estimates are needed.
\begin{lemma}
For any $\al\in \bR$, $r>1$ and $k\in\bZ-\{0\}$, it holds that
\be\label{bessel2}
\int^r_1 s^\al e^{|k|s}\md s\ls_\al |k|^{-1} e^{|k|r} r^\al,\q \int^{+\i}_r s^\al e^{-|k|s}ds\ls_\al |k|^{-1} e^{-|k|r} r^\al.
\ee
\end{lemma}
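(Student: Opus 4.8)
The plan is to treat both bounds by the same device: translate each integral so that the rapidly varying exponential is centered at the endpoint $r$, pull out the factor $r^\al e^{\pm|k|r}$, and then show the remaining integral is $\ls_\al |k|^{-1}$ uniformly in $r\geq1$ and $|k|\geq1$.

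For the first bound I would substitute $u=r-s$ to write $\int_1^r s^\al e^{|k|s}\md s = r^\al e^{|k|r}\int_0^{r-1}(1-u/r)^\al e^{-|k|u}\md u$, so it suffices to prove $\int_0^{r-1}(1-u/r)^\al e^{-|k|u}\md u\ls_\al |k|^{-1}$. When $\al\geq0$ one has $(1-u/r)^\al\leq1$ on $[0,r]$, and the integral is bounded by $\int_0^\i e^{-|k|u}\md u=|k|^{-1}$. The delicate case is $\al<0$, where $(1-u/r)^\al=(r/(r-u))^{|\al|}$ blows up as $u\to r-1$ (that is, as $s\to1$). Here I would split at $u=r/2$: on $[0,r/2]$ the factor satisfies $(r/(r-u))^{|\al|}\leq2^{|\al|}$, giving a contribution $\ls_\al|k|^{-1}$; on $[r/2,r-1]$ the factor is at most $r^{|\al|}$ (since $r-u\geq1$ there), while $e^{-|k|u}\leq e^{-|k|r/2}$ and the interval has length $\leq r$, so the contribution is $\leq r^{|\al|+1}e^{-|k|r/2}=|k|^{-1}\,|k|^{-|\al|}(|k|r)^{|\al|+1}e^{-|k|r/2}\ls_\al|k|^{-1}$, using $|k|\geq1$ together with the boundedness of $x\mapsto x^{|\al|+1}e^{-x/2}$ on $[0,\i)$.

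The second bound is analogous with the signs reversed. Substituting $u=s-r$ gives $\int_r^\i s^\al e^{-|k|s}\md s=r^\al e^{-|k|r}\int_0^\i(1+u/r)^\al e^{-|k|u}\md u$, and now the easy case is $\al\leq0$, where $(1+u/r)^\al\leq1$ yields the bound $|k|^{-1}$ immediately. For $\al>0$ I would use $r\geq1$ and the elementary inequality $(1+u/r)^\al\leq2^\al(1+u^\al)$ to reduce to $\int_0^\i(1+u^\al)e^{-|k|u}\md u=|k|^{-1}+\Gamma(\al+1)|k|^{-\al-1}\ls_\al|k|^{-1}$, again because $|k|\geq1$.

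The main obstacle is the first integral in the range $\al<0$: there the algebraic weight $s^\al$ is largest precisely where the exponential is smallest (near $s=1$), so no global pointwise comparison $s^\al\les r^\al$ is available, and one genuinely needs the splitting above, exploiting that the lower limit is $s=1$ (not $s=0$) together with $r\geq1$ and $|k|\geq1$ to absorb the growing weight against the exponential decay. All constants produced this way depend only on $\al$, as claimed.
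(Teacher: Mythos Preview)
Your argument is correct, but it proceeds quite differently from the paper's. The paper handles both estimates in one stroke by peeling off a small piece of the exponential: for the first integral it writes
\[
\int_1^r s^{\al}e^{|k|s}\,\md s=\int_1^r \big(s^{\al}e^{s/2}\big)\,e^{(|k|-\tfrac12)s}\,\md s\les_{\al} r^{\al}e^{r/2}\int_1^r e^{(|k|-\tfrac12)s}\,\md s,
\]
using that $s\mapsto s^{\al}e^{s/2}$ is bounded on $[1,r]$ by $C_\al\,r^{\al}e^{r/2}$ for every $\al\in\bR$ (the factor $e^{s/2}$ dominates any power), and then integrates the remaining pure exponential. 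The second estimate is treated symmetrically with $s^{\al}e^{-s/2}$. This device avoids any case analysis in the sign of $\al$ and any secondary splitting of the domain. Your approach, by contrast, translates to center the exponential at $r$, which makes the $\al\geq0$ case (respectively $\al\leq0$ for the second integral) immediate but forces the dyadic split near $u=r-1$ when $\al<0$. Both routes ultimately lean on $|k|\geq1$ and the lower bound $s\geq1$; the paper's is shorter and treats all $\al$ uniformly, while yours is more explicit about where the difficulty with negative $\al$ actually sits.
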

\begin{proof}
Direct calculation shows
\[
\int^r_1 s^\al e^{|k|s}\md s=\int^r_1 s^\al e^{\f{s}{2}}e^{\left(|k|-\f{1}{2}\right)s}\md s\les_{\al}r^\al e^{\f{r}{2}}\int_1^r e^{\left(|k|-\f{1}{2}\right)s}\md s=\left(|k|-\f{1}{2}\right)^{-1}r^\al e^{\f{r}{2}}\left(e^{\left(|k|-\f{1}{2}\right)r}-e^{|k|-\f{1}{2}}\right)\,.
\]
This proves the first estimate in \eqref{bessel2}. Similarly,
\[
\int^\i_r s^\al e^{-|k|s}\md s=\int^\i_r s^\al e^{-\f{s}{2}}e^{\left(\f{1}{2}-|k|\right)s}\md s\les_{\al}r^\al e^{-\f{r}{2}}\int_r^\i e^{\left(\f{1}{2}-|k|\right)s}\md s=\left(|k|-\f{1}{2}\right)^{-1}r^\al e^{-\f{r}{2}}e^{\left(\f{1}{2}-|k|\right)r}\,,
\]
which shows the validity of the second estimate.
\end{proof}

\leftline{\emph{Proof of Proposition \ref{Prop2}.}} Now it remains to bound the theta component of the velocity given in \eqref{subn0}. First we estimate the coefficient $\bar{v}_{k}$ given in \eqref{linearvth1}. Direct calculation shows
\begin{align}
|\bar{v}_{k}| &\ls |k|^{\f{1}{2}} e^{|k|}\left(|g_{\theta, k}|+|k|^{-\f{1}{2}}  e^{|k|}\lt|\int_1^{\infty} \bar{f}_{\th,k} (s) s^{1-\nu} \mK_k(s) \md s\rt|\right) \nn\\
       &\ls_\nu |k|^{\f{1}{2}} e^{|k|}\left(|g_{\theta, k}|+|k|^{-\f{1}{2}}  e^{|k|}\|\bar{f}_{\th,k}\|_{L^{\i}_{\t{\la}}} \lt|\int_1^{\infty}  s^{1-\nu-\t{\la}} |k|^{-\f{1}{2}}s^{\f{\nu-1}{2}}e^{-|k|s} \md s\rt|\right)\nn\\
        &\ls_{\nu,\t{\la}} |k|^{\f{1}{2}} e^{|k|}\left(|g_{\theta, k}|+|k|^{-2}\|\bar{f}_{\th,k}\|_{L^{\i}_{\t{\la}}} \right). \label{linearvth2}
\end{align}
Here we have applied Lemma \ref{lemKI} in the first and second inequality. And the third line holds because
\[
\lt|\int_1^{\infty}  s^{\f{1}{2}-\f{\nu}{2}-{\tilde{\la}}}e^{-|k|s} \md s\rt|\leq\sup_{s\in[1,\i)}s^{\f{1}{2}-\f{\nu}{2}-{\tilde{\la}}}e^{-\f{s}{2}}\int_1^\i e^{\left(\f{1}{2}-|k|\right)s} \md s\lesssim_{\nu,{\tilde{\la}}}|k|^{-1}e^{-|k|}\,.
\]
Then by using estimates in \eqref{bessel1}, we can obtain that
\begin{align}
k^2|\bar{v}_{k}\mK_k(r)|+|k|\cd|\bar{v}_{k}\mK^\prime_k(r)|\ls r^{\f{\nu-1}{2}} k^2 e^{|k|(1-r)}\left(|g_{\theta, k}|+|k|^{-2}\|\bar{f}_{\th,k}\|_{L^{\i}_{\tilde{\la}}} \right). \label{linearvth2}
\end{align}

Now we give the estimate of integral terms in \eqref{linearvth}. First we consider
\[
\mathcal{L}_1(r):=\mK_k(r)\int_1^r\bar{f}_{\th,k} (s)s^{1-\nu}\mI_k(s)\md s\,.
\]
Using \eqref{bessel1} and \eqref{bessel2}, we have
\begin{align}\label{EEEE0}
|\mathcal{L}_1(r)|& \ls_\nu |k|^{-\f{1}{2}}r^{\f{\nu-1}{2}}e^{-|k|r} \|\bar{f}_{\th,k}\|_{L^{\i}_{\tilde{\la}}}\int^r_1 s^{1-\nu-{\tilde{\la}}}   |k|^{-\f{1}{2}}s^{\f{\nu-1}{2}}e^{|k|s} \md s\nn\\
&\ls_{\nu,{\tilde{\la}}} |k|^{-2}r^{\f{\nu-1}{2}}e^{-|k|r} \|\bar{f}_{\th,k}\|_{L^{\i}_{\tilde{\la}}} r^{\f{1-\nu}{2}-{\tilde{\la}}}e^{|k|r}=|k|^{-2}\|\bar{f}_{\th,k}\|_{L^{\i}_{\tilde{\la}}}r^{-{\tilde{\la}}}\,.
\end{align}
Taking the first derivative of $\mathcal{L}_1(r)$, using \eqref{bessel1} and \eqref{bessel2}, we can see that
\begin{align}
|\mathcal{L}^\prime_1(r)|\leq& \lt|\mK^\prime_k(r)\int_1^r \bar{f}_{\th,k} (s)s^{1-\nu}\mI_k(s)\md s\rt|+\lt|\mK_k(r) \bar{f}_{\th,k} (r)r^{1-\nu}\mI_k(r)\rt|\nn\\[1mm]
                    \ls_{\nu,{\tilde{\la}}}& |k|^{-1}\|\bar{f}_{\th,k}\|_{L^{\i}_{\tilde{\la}}}r^{-{\tilde{\la}}}+ |k|^{-\f{1}{2}}r^{\f{\nu-1}{2}}e^{-|k|r} \|\bar{f}_{\th,k}\|_{L^{\i}_{\tilde{\la}}} r^{-{\tilde{\la}}}r^{1-\nu}|k|^{-\f{1}{2}}r^{\f{\nu-1}{2}}e^{|k|r}\nn\\[1mm]
                     \ls_{\nu,{\tilde{\la}}}& |k|^{-1}\|\bar{f}_{\th,k}\|_{L^{\i}_{\tilde{\la}}}r^{-{\tilde{\la}}}.\label{EEEE}
\end{align}
Then we conclude that
\be\label{linearvth3}
k^2|\mathcal{L}_1(r)|+|k|\cd|\mathcal{L}^\prime_1(r)|\ls_{\nu,{\tilde{\la}}} \|\bar{f}_{\th,k}\|_{L^{\i}_{\tilde{\la}}}r^{-{\tilde{\la}}}
\ee
by combining \eqref{EEEE0} and \eqref{EEEE}. Akin to the above estimates, one denotes
\[
\mathcal{L}_\i(r)=\mI_k(r)\int_r^{\i}\bar{f}_{\th,k} (s)s^{1-\nu}\mK_k(s)\md s\,.
\]
Using \eqref{bessel1} and \eqref{bessel2}, we have
\begin{align}\label{FFFF0}
|\mathcal{L}_\i(r)| &\ls_\nu |k|^{-\f{1}{2}}r^{\f{\nu-1}{2}}e^{|k|r} \|\bar{f}_{\th,k}\|_{L^{\i}_{\tilde{\la}}}\int^{+\i}_r s^{1-\nu-{\tilde{\la}}}   |k|^{-\f{1}{2}}s^{\f{\nu-1}{2}}e^{-|k|s} \md s\nn\\
&\ls_{\nu,{\tilde{\la}}} |k|^{-2}r^{\f{\nu-1}{2}}e^{|k|r} \|\bar{f}_{\th,k}\|_{L^{\i}_{\tilde{\la}}} r^{\f{1-\nu}{2}-{\tilde{\la}}}e^{-|k|r}=|k|^{-2}\|\bar{f}_{\th,k}\|_{L^{\i}_{\tilde{\la}}}r^{-{\tilde{\la}}}\,.
\end{align}
Taking the first derivative of $\mathcal{L}_\i(r)$, we derive the following estimate similarly as we achieve \eqref{EEEE}:
\begin{align}\label{FFFF}
|\mathcal{L}^\prime_\i(r)|&\leq\lt|\mI^\prime_k(r)\int_r^{+\i} \bar{f}_{\th,k} (s)s^{1-\nu}\mK_k(s)\md s\rt|+\lt| \mI_k(r) \bar{f}_{\th,k} (r)r^{1-\nu}\mK_k(r)\rt| \nn\\[1mm]
                    &\ls_{\nu,{\tilde{\la}}} |k|^{-1}\|\bar{f}_{\th,k}\|_{L^{\i}_{\tilde{\la}}}r^{-{\tilde{\la}}}+ |k|^{-\f{1}{2}}r^{\f{\nu-1}{2}}e^{|k|r} \|\bar{f}_{\th,k}\|_{L^{\i}_{\tilde{\la}}} r^{-{\tilde{\la}}}r^{1-\nu}|k|^{-\f{1}{2}}r^{\f{\nu-1}{2}}e^{-|k|r}\nn\\[1mm]
                     &\ls_{\nu,{\tilde{\la}}} |k|^{-1}\|\bar{f}_{\th,k}\|_{L^{\i}_{\tilde{\la}}}r^{-{\tilde{\la}}}\,.
\end{align}
Then we find
\be\label{linearvth4}
k^2|\mathcal{L}_\i(r)|+|k|\cd|\mathcal{L}^\prime_\i(r)|\ls_{\nu,{\tilde{\la}}} \|\bar{f}_{\th,k}\|_{L^{\i}_{\tilde{\la}}}r^{-{\tilde{\la}}}\,.
\ee
by \eqref{FFFF0} and \eqref{FFFF}. Combining estimates in \eqref{linearvth2}, and \eqref{linearvth3} and \eqref{linearvth4}, we conclude that
\be\label{linearvth5}
k^2|v_{\th,k}(r)|+|k|\cd|v^\prime_{\th,k}(r)|\ls_{\nu,{\tilde{\la}}} \|\bar{f}_{\th,k}\|_{L^{\i}_{\tilde{\la}}}r^{-{\tilde{\la}}}+ e^{-r}k^2|g_{\th,k}|\,.
\ee
From \eqref{elinear}$_1$, we have
\be\label{linearvth6}
|v^{\prime\prime}_{\th,k}(r)|\ls_\nu r^{-1}|v^{\prime}_{\th,k}(r)|+ k^2 |v_{\th,k}(r)|+|\bar{f}_{\th,k}|\,.
\ee
Thus one concludes
\be\label{linearvth7}
k^2|v_{\th,k}(r)|+|k|\cd|v^\prime_{\th,k}(r)|+|v^{\prime\prime}_{\th,k}(r)|\ls_{\nu,{\tilde{\la}}} \|\bar{f}_{\th,k}\|_{L^{\i}_{\tilde{\la}}}r^{-{\tilde{\la}}}+ e^{-r}k^2|g_{\th,k}|\,.
\ee
By combining \eqref{linearvth5} and \eqref{linearvth6}. This completes the proof of the proposition.

$\hfill\square$

\begin{remark}
Here we only need $\t{\la}>1$ to guarantee the validity of the asymptotic condition
\[
v_{\th,k}(r)=o(r^{-1})\,\q\text{as}\q r\to\i\,.
\]
However, this is not enough for us to construct the nonlinear solution. See Section \ref{sec4} below. That is why we impose the condition $\la>3/2$ in the main theorem.
\end{remark}

$\hfill\square$

}

\subsection{The non-zero mode of $\bl{(v_r,v_z)}$  and their estimates}
Since $v_{\th}$ is solved, now we are ready for the remaining $v_r$ and $v_z$, which satisfies:
 \be\label{elinearrz}
\left\{\begin{aligned}
&-\left(\f{\md^2}{\md r^2}+\f{1-{\nu}}{r}\f{\md}{\md r}-\f{1-{\nu}}{r^2}-k^2\right)v_{r,k}+\f{\md}{\md r}\pi_k=\bar{f}_{r,k},\q&\text{for}\q r\in]1,\i[\,,\\[1mm]
&-\left(\f{\md^2}{\md r^2}+\f{1-{\nu}}{r}\f{\md}{\md r}-k^2\right)v_{z,k}+ik\pi_k=\bar{f}_{z,k}\,,\q&\text{for}\q r\in]1,\i[\,,\\[1mm]
&ikv_{z,k}+\f{\md}{\md r}v_{r,k}+\f{v_{r,k}}{r}=0\,,\q&\text{for}\q r\in]1,\i[\,,\\[1mm]
&v_{r,k}(1)=g_{r,k}\,\q v_{z,k}(1)=g_{z,k}\,,\q v_{r,k}(+\i)=v_{z,k}(+\i)=0.&
\end{aligned}\right.
\ee

Now we give the main result of this subsection:
\begin{proposition}\label{Prop3}
For $k\in\mathbb{Z}-\{0\}$, given $\bar{f}_{r,k},\,\bar{f}_{z,k}\in L^\i_{\la}$ with $\la>3/2$. The boundary value problem \eqref{elinearrz} has a unique solution
\[
(v_{r,k}(r)\,,\,v_z(r))=o(r^{-1})\,\q\text{as}\q r\to\i\,.
\]
And it satisfies the estimate
\be\label{Mainvthk}
\|(v_{r,k}'',v_{z,k}'')\|_{L^\i_{\t{\la}}}+|k|\cd\|(v_{r,k}',v_{z,k}')\|_{L^\i_{\t{\la}}}+k^2\|(v_{r,k},v_{z,k})\|_{L^\i_{\t{\la}}}\les_{\nu,\t{\la}}\|(\bar{f}_{r,k},\bar{f}_{z,k})\|_{L^\i_{\t{\la}}}+k^2\left(|g_{r,k}|+|g_{z,k}|\right)\,
\ee
for any $\t{\la}\in\,]\f{3}{2},\la]$ .
\end{proposition}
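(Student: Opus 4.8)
The plan is to eliminate the pressure by passing to the azimuthal vorticity and then to reconstruct $(v_{r,k},v_{z,k})$ from it through the Fourier-reduced Biot--Savart law, i.e. a stream function. Denote by $\mL_r$ and $\mL_z$ the (pressure-free) operators on the left of \eqref{elinearrz}$_1$ and \eqref{elinearrz}$_2$, and set
\[
\o_{\th,k}(r):=ik\,v_{r,k}(r)-\f{\md}{\md r}v_{z,k}(r).
\]
Forming $ik\cd\eqref{elinearrz}_1-\f{\md}{\md r}\eqref{elinearrz}_2$ cancels the two pressure contributions $ik\,\f{\md}{\md r}\pi_k$. The crucial algebraic point is the intertwining identity $\f{\md}{\md r}\mL_z=\mL_r\,\f{\md}{\md r}$, which holds \emph{precisely because} the term $-\f{1-\nu}{r^2}$ distinguishing $\mL_r$ from $\mL_z$ is exactly what is produced when $\f{\md}{\md r}$ passes through the coefficient $\f{1-\nu}{r}$; together with $ik\,\mL_r=\mL_r(ik\,\cd)$ this gives $ik\,\mL_r v_{r,k}-\f{\md}{\md r}\mL_z v_{z,k}=\mL_r\o_{\th,k}$, so that
\[
-\left(\f{\md^2}{\md r^2}+\f{1-\nu}{r}\f{\md}{\md r}-\f{1-\nu}{r^2}-k^2\right)\o_{\th,k}=ik\,\bar f_{r,k}-\f{\md}{\md r}\bar f_{z,k}\,,\qquad r\in\,]1,\i[\,.
\]
This is structurally identical to \eqref{PoNSvLFk}, the only change being that the coefficient of $r^{-2}$ is now $1-\nu$ instead of $1+\nu$. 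Repeating the substitution $\o_{\th,k}=r^{\nu/2}g(|k|r)$ produces the modified Bessel equation of index $\big|1-\f{\nu}{2}\big|=1-\f{\nu}{2}$ (since $\nu<0$); its two fundamental solutions are the index-$(1-\nu/2)$ analogues of $\mK_k,\mI_k$ in \eqref{FUnc}, and they obey the same pointwise bounds as in Lemma \ref{lemKI} and \eqref{bessel2}, since those bounds only use the asymptotics \eqref{bessel}.

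Next I would solve this vorticity ODE exactly as in the proof of Proposition \ref{Prop2}: take the variation-of-parameters representation built from the decaying kernel, so that $\o_{\th,k}=o(r^{-1})$, leaving one free constant multiplying the decaying homogeneous solution. The one genuinely new feature is the forcing term $\f{\md}{\md r}\bar f_{z,k}$, since only $\|\bar f_{z,k}\|_{L^\i_{\t{\la}}}$ is available and not a bound on its derivative. I would integrate by parts in the relevant integrals, for instance replacing $\int \bar f_{z,k}'(s)\,s^{1-\nu}\mK_k(s)\,\md s$ by $-\int \bar f_{z,k}(s)\,\f{\md}{\md s}\!\big(s^{1-\nu}\mK_k(s)\big)\,\md s$ plus boundary terms, thereby transferring the derivative onto the explicitly controlled kernels, whose derivatives are estimated by the index-$(1-\nu/2)$ analogue of \eqref{bessel1}; the boundary contributions at $s=1,\,r,\,\i$ are harmless. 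This yields weighted bounds on $\o_{\th,k}$ and $\o_{\th,k}'$ in $L^\i_{\t{\la}}$ controlled by $\|(\bar f_{r,k},\bar f_{z,k})\|_{L^\i_{\t{\la}}}$ and $k^2(|g_{r,k}|+|g_{z,k}|)$, with the same $|k|$-power bookkeeping as in \eqref{linearvth5}--\eqref{linearvth7}.

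Finally I would recover the velocity. Incompressibility \eqref{elinearrz}$_3$ lets me introduce a stream function $\psi_k$ with $v_{r,k}=-\f{ik}{r}\psi_k$ and $v_{z,k}=\f{1}{r}\f{\md}{\md r}\psi_k$; then the definition of $\o_{\th,k}$ becomes the single second-order ODE
\[
\f{\md^2}{\md r^2}\psi_k-\f{1}{r}\f{\md}{\md r}\psi_k-k^2\psi_k=-r\,\o_{\th,k}\,,
\]
whose homogeneous solutions are $r\,I_1(|k|r)$ and $r\,K_1(|k|r)$. Discarding the growing kernel $rI_1$ supplies a second free constant. The two remaining constants (one from the vorticity stage, one here) are fixed by the two boundary conditions $v_{r,k}(1)=g_{r,k}$ and $v_{z,k}(1)=g_{z,k}$; I would verify that the resulting $2\times2$ linear system is invertible with inverse bounded uniformly in $k$, using the Wronskian and the positivity/asymptotics \eqref{bessel} of the modified Bessel functions. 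Propagating the bounds on $\o_{\th,k},\o_{\th,k}'$ through this second Bessel solve controls $\psi_k,\psi_k'$; the second derivatives $v_{r,k}'',v_{z,k}''$ are then obtained by bootstrapping, using the stream ODE to express $\psi_k''$ and its $r$-derivative to express $\psi_k'''$ in terms of $\o_{\th,k},\o_{\th,k}'$ and lower-order terms, which sidesteps the pressure entirely. This gives the estimate claimed in Proposition \ref{Prop3}, and uniqueness follows since the homogeneous problem forces both free constants to vanish.

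I expect the main obstacle to be the bookkeeping of this two-stage Bessel solve: one must carry the exact decay rate $r^{-\t{\la}}$ and the precise powers of $|k|$ through both the vorticity integrals and the stream-function integrals simultaneously. Within this, the integration by parts removing $\f{\md}{\md r}\bar f_{z,k}$ and the uniform-in-$k$ invertibility of the boundary-matching system are the two delicate points where an imprecise constant would spoil the final weighted estimate; by contrast, the pressure elimination and the individual kernel bounds are direct analogues of what was already established for $v_{\th,k}$.
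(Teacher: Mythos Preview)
Your proposal is correct and follows essentially the same route as the paper: eliminate the pressure by passing to the azimuthal vorticity (which satisfies the same Bessel-type ODE as $v_{\th,k}$ but with index $|1-\nu/2|$), integrate by parts to trade $\bar f_{z,k}'$ for $\bar f_{z,k}$ against derivatives of the explicit kernels, reconstruct the velocity through a stream function solved by the index-$1$ Bessel functions, and fix the two free constants from the pair of boundary conditions using the Wronskian identity $K_1'I_1-K_1I_1'=-x^{-1}$. The only cosmetic difference is that the paper works with $\phi_k$ satisfying $v_{r,k}=-ik\phi_k$, $v_{z,k}=\phi_k'+r^{-1}\phi_k$ (so the homogeneous kernels are $K_1(|k|r),\,I_1(|k|r)$ themselves), whereas your $\psi_k=r\phi_k$; this changes nothing in the estimates.
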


In each Fourier mode, to overcome the difficulty caused by the pressure, we introduce the stream function and vorticity to the linearized system \eqref{PoNSvL00}$_{1,3}$. Since the related linear velocity field $\bl{\mathfrak{b}}:=v_r\bl{e_r}+v_z\bl{e_z}$ is divergence-free, there exists a periodic stream function $\phi$ in the $z$ variable such that
\[
v_r=-\p_z\phi,\q\text{and}\q v_z=\f{1}{r}\p_r(r\phi)\,,
\]
and define the vorticity $w$ by
\[
w:=\p_zv_r-\p_rv_z\,,
\]
which satisfies
\[
-\left(\Dl-\f{1}{r^2}\right)\phi=w.
\]
Using \eqref{PoNSvL00}$_{1,3}$, we derive $w$ satisfies
\be\label{Evor}
-\left(\p_r^2+\f{1-{\nu}}{r}\p_r-\f{1-{\nu}}{r^2}+\p_z^2\right)w=F=\p_z\bar{f}_r-\p_r\bar{f}_z.
\ee
After the vorticity $w$ is achieved, we can solve the stream function $\phi$ which satisfies
\be\label{Ephi}
-\left(\p_r^2+\f{1}{r}\p_r+\p_z^2-\f{1}{r^2}\right)\phi=w\,,
\ee
and recover $v_r$ and $v_z$ by
\be\label{vphi}
v_r=-\p_z\phi,\q\text{and}\q v_z=\f{1}{r}\p_r(r\phi)\,.
\ee
Writing \eqref{Evor}--\eqref{Ephi} in each Fourier mode, we deduce the following system:
\be\label{streamvor}
\lt\{
\begin{aligned}
&-\left(\f{\md^2}{\md r^2}+\f{1}{r}\f{\md}{\md r}-k^2-\f{1}{r^2}\right)\phi_k(r)=w_k(r)\,,&\q\text{in}\q r\in]1,\i[\,,\\[1mm]
&-\left(\f{\md^2}{\md r^2}+\f{1-{\nu}}{r}\f{\md}{\md r}-\f{1-{\nu}}{r^2}-k^2\right)w_{k}(r)=F_{k}:=ik\bar{f}_{r,k}(r)-\bar{f}_{z,k}'(r)\,,&\q\text{in}\q r\in]1,\i[\,,\\[1mm]
&\phi_k(1)=\bar{\phi}_k,\q w_k(1)=\bar{w}_k,\q\phi_k(+\i)=w_k(+\i)=0\,.&
\end{aligned}
\rt.
\ee
System \eqref{streamvor} can also be deduce from \eqref{elinearrz} by eliminating $\pi_k$. Now we only consider the case of $k\neq0$ in this subsection, while the rest will be discussed later. The homogeneous vorticity equation in \eqref{streamvor} reads:
\[
-\left(\frac{\md^2}{\md r^2}+\frac{1-\nu}{r} \frac{\md}{\md r}-\frac{1-\nu}{r^2}-k^2\right) \tilde{w}_k(r)=0\,.
\]
Similarly as we handled the equation \eqref{BE1} before, we rewrite
\[
\tilde{w}_k(r)=r^{\f{\nu}{2}}h(|k|r)\,,
\]
and direct calculation shows $h$ satisfies the following modified Bessel equation
\[
-h''-\f{1}{r}h'+\Big(1+\f{\left(1-\f{\nu}{2}\right)^2}{r^2}\Big)h=0\,.
\]
As we considered in the previous subsection, it has two linearly independent solutions $I_{\left|1-\f{\nu}{2}\right|}(r)$ and $K_{\left|1-\f{\nu}{2}\right|}(r)$, which are exponential growing and exponential decaying at $r\to\i$, respectively. Akin to \eqref{FUnc}, we denote
\[
\mathfrak{K}_k(r):=r^{\f{\nu}{2}}K_{\left|1-\f{\nu}{2}\right|}(|k|r)\,,\q \mathfrak{I}_k(r):=r^{\f{\nu}{2}}I_{\left|1-\f{\nu}{2}\right|}(|k|r)\,.
\]
Similarly as \eqref{subn0}, the solution of \eqref{streamvor}$_2$ that decays faster enough at spacial infinity can be represented by  for $k\in\bZ-\{0\}$,
\be\label{subn0w}
\begin{split}
w_k(r)=&\bar{w}_k\mathfrak{K}_k(r)+\mathfrak{K}_k(r)\int_1^rF_k(s)s^{1-\nu}\mathfrak{I}_k(s)\md s+\mathfrak{I}_k(r)\int_r^{\i}F_k(s)s^{1-\nu}\mathfrak{K}_k(s)\md s.
\end{split}
\ee

After the $w_k$ is achieved, we can solve the $k$ mode of the stream function as  for $k\in\bZ-\{0\}$,
\be\label{Sphi}
\phi_k(r)={\bar{\phi}_k}K_1(|k|r)+K_1(|k|r)\int_1^{r} I_1(|k|s)sw_k(s)\mathrm{d} s+I_1(|k|r)\int_r^\i K_1(|k|s)sw_k(s)\mathrm{d} s.
\ee

Notice that the pair of constants $(\bar{w}_k\,,\,\bar{\phi}_k)$ is not known at the moment. We now intend to obtain it by applying the boundary condition of ${v_{r,k}}$ and $v_{z,k}$. Recall \eqref{vphi} and the boundary condition \eqref{elinear}$_4$, we derives
\be\label{Bf}
g_{r,k}=-ik\phi_k(1),\q\text{and}\q g_{z,k}=\phi_k'(1)+\phi_k(1)\,.
\ee
Substituting \eqref{Bf} in \eqref{Sphi}, we derive
\[
\left\{
\begin{split}
&-\f{g_{r,k}}{ik}=\bar{\phi}_kK_1(|k|)+I_1(|k|)\int_1^\i K_1(|k|s)sw_k(s)\mathrm{d} s\,,\\
&g_{z,k}=\bar{\phi}_k\left\{K_1(|k|)+|k|K_1'(|k|)\right\}+\left(I_1(|k|)+|k|I_1'(|k|)\right)\int_1^\i K_1(|k|s)sw_k(s)\mathrm{d} s\,,
\end{split}
\right.\q\text{for}\q k\in\mathbb{Z}-\{0\}\,.
\]
Eliminating the terms of $w_k$ and $\bar{\phi}_k$ from the above equations, we derive that for any $k \in \mathbb{Z}-\{0\}$
\be\label{Sphik}
\left\{
\begin{split}
&\bar{\phi}_k=-g_{z,k}I_1(|k|)-\f{g_{r,k}}{ik}\left(I_1(|k|)+|k|I_1'(|k|)\right)\,,\\[1mm]
&\int_1^\i K_1(|k|s)sw_k(s)\md s=\f{g_{r,k}}{ik}I_1^{-1}(|k|)\left(I_1(|k|)K_1(|k|)+|k|I_1'(|k|)K_1(|k|)-1\right)+g_{z,k}K_1(|k|)\,.
\end{split}
\right.
\ee
Here we have applied the following identity for modified Bessel functions\footnote{See page 375 in \cite{AS1992}.}:
\[
K_1'(x)I_1(x)-K_1(x)I_1'(x)=-x^{-1},\q\text{for}\q x>0\,.
\]
Inserting \eqref{subn0w} in \eqref{Sphik}$_2$, one deduces
\be\label{swk}
\begin{aligned}
&\bar{w}_k \int_1^{\infty} K_1(|k| s)s\mathfrak{R}_k(s) \md s+\int_1^{\infty} K_1(|k|s)s h_{k, F}(s) \md s \\
= & \frac{g_{r, k}}{i k} I_1^{-1}(|k|)\left(I_1(|k|) K_1(|k|)+|k| I_1^{\prime}(|k|) K_1(|k|)-1\right)+g_{z, k} K_1(|k|)\,,
\end{aligned}
\ee
with
\be\label{HKF}
h_{k,F}=\mathfrak{K}_k(r)\int_1^rF_k(s)s^{1-\nu}\mathfrak{I}_k(s)\md s+\mathfrak{I}_k(r)\int_r^{\i}F_k(s)s^{1-\nu}\mathfrak{K}_k(s)\md s\,.
\ee
For simplicity, we denote
\begin{align}
& A_k=\frac{1}{i k} I_1^{-1}(|k|)\left(I_1(|k|) K_1(|k|)+|k| I_1^{\prime}(|k|) K_1(|k|)-1\right)\,;\nn \\[1mm]
& B_k=K_1(|k|)\,; \nn\\[1mm]
& D_k=\int_1^{\infty} K_1(|k| s) s\mathfrak{R}_k(s)\md s\,;\nn\\[1mm]
& G_{k,F}=\int_1^{\infty} K_1(|k|s)s h_{k, F}(s) \md s\,.\label{abdg}
\end{align}
Then we infer from \eqref{swk} that
\be\label{Ewk}
\bar{w}_k=D_k^{-1}\left(A_kg_{r,k}+B_kg_{z,k}-G_{k,F}\right)\,.
\ee

{
Akin to Lemma \ref{lemKI}, one has the following estimates for $\mathfrak{K}_k$ and $\mathfrak{I}_k$. We list the lemma here without proof.

\begin{lemma}\label{lemfKfI}
The linearly independent fundamental solutions $\mathfrak{K}_k(r)$ and $\mathfrak{I}_k(r)$ have the following estimates

\be\label{bessel7}
\f{\md^n}{\md r^n}\fK_k(r)\les_\nu |k|^{-\f{1}{2}+n}r^{\f{\nu-1}{2}}e^{-|k|r},\q \f{\md^n}{\md r^n}\fI_k(r)\les_\nu |k|^{-\f{1}{2}+n}r^{\f{\nu-1}{2}}e^{|k|r}\,,
\ee
for $n=0,\,1,\,2$ .
\end{lemma}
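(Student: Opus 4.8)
The plan is to follow the proof of Lemma \ref{lemKI} line by line, the only change being that the Bessel index $\left|1+\f{\nu}{2}\right|$ is everywhere replaced by $\left|1-\f{\nu}{2}\right|$. The two facts that power that argument --- the large-argument asymptotics \eqref{bessel} and the differentiation identities \eqref{BP111} and \eqref{BP222} --- both hold for an arbitrary nonnegative index $\al$, so the structure of the estimates, including the exact powers of $|k|$ and $r$, is insensitive to which index appears. Hence no new idea is required beyond transcribing the earlier computation.

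For $n=0$ I would apply \eqref{bessel} directly with $\al=\left|1-\f{\nu}{2}\right|$, using $|k|r\geq1$, to get
\[
|\fK_k(r)|=r^{\f{\nu}{2}}K_{\left|1-\f{\nu}{2}\right|}(|k|r)\les_\nu r^{\f{\nu}{2}}(|k|r)^{-\f{1}{2}}e^{-|k|r}=|k|^{-\f{1}{2}}r^{\f{\nu-1}{2}}e^{-|k|r}\,,
\]
and likewise for $\fI_k$ with $e^{-|k|r}$ replaced by $e^{|k|r}$. For $n=1$, differentiating $\fK_k$ and invoking \eqref{BP111} rewrites $\fK_k'$ as a linear combination of $r^{\f{\nu}{2}-1}K_{\left|1-\f{\nu}{2}\right|}(|k|r)$ and $|k|r^{\f{\nu}{2}}K_{1+\left|1-\f{\nu}{2}\right|}(|k|r)$; bounding each summand by \eqref{bessel} produces the factor $|k|^{\f{1}{2}}=|k|^{-\f{1}{2}+1}$, the second summand being dominant for $r\geq1$. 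The case $n=2$ follows by differentiating once more and reapplying \eqref{BP111}, yielding $|k|^{\f{3}{2}}=|k|^{-\f{1}{2}+2}$; the parallel computation based on \eqref{BP222} disposes of $\fI_k$ and its derivatives, with $e^{-|k|r}$ replaced by $e^{|k|r}$ throughout.

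The only structural difference from Lemma \ref{lemKI} is benign, and indeed this is what I expect to be the sole point worth checking rather than a genuine obstacle: since $\nu<0$ we have $\left|1-\f{\nu}{2}\right|=1-\f{\nu}{2}>1$, so this index stays bounded away from zero, in contrast to $\left|1+\f{\nu}{2}\right|$, which degenerates at $\nu=-2$. Consequently the coefficient $\f{\al}{x}$ arising from \eqref{BP111} and \eqref{BP222} never becomes singular and all implied constants depend only on $\nu$. There is therefore no real difficulty --- which is why the authors record the lemma without proof --- and \eqref{bessel7} follows for $n=0,1,2$ exactly as before.
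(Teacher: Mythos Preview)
Your proposal is correct and matches the paper's own treatment: the authors explicitly write ``Akin to Lemma \ref{lemKI}, one has the following estimates for $\mathfrak{K}_k$ and $\mathfrak{I}_k$. We list the lemma here without proof,'' so the intended argument is precisely the index-swap you describe. Your remark that $\left|1-\tfrac{\nu}{2}\right|=1-\tfrac{\nu}{2}>1$ for $\nu<0$ is a nice sanity check but not essential, since the asymptotics \eqref{bessel} and identities \eqref{BP111}, \eqref{BP222} are valid for all nonnegative indices and the resulting constants depend only on $\nu$.
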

$\hfill\square$

Now we are ready for deriving the upper bound of $\bar{\phi}_k$ and $\bar{w}_k$.
\begin{lemma}
For $\t{\la}\in]3/2,\la]$, the following estimates of $\bar{\phi}_k$ and $\bar{w}_k$ hold:
\be\label{vor7}
\begin{split}
|\bar{\phi}_{k}|&\ls_\nu |k|^{-1/2}\lt(|g_{r,k}|+|g_{z,k}|\rt)e^{|k|}\,;\\[1mm]
|\bar{w}_k|&\ls_\nu |k|^{\f{3}{2}}e^{|k|}\lt(|g_{r,k}|+|g_{z,k}|\rt)+ |k|^{-\f{1}{2}}e^{|k|} \|(\bar{f}_{r,k},\bar{f}_{z,r})\|_{L^{\i}_{\tilde{\la}}}\,.
\end{split}
\ee
\end{lemma}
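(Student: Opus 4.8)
The plan is to read off both bounds directly from the explicit formulas \eqref{Sphik} and \eqref{Ewk}, feeding in the sharp two-sided asymptotics \eqref{bessel} for $I_1,K_1$, the derivative identity \eqref{BP222}, and the kernel estimates of Lemma \ref{lemfKfI}. I would dispose of $\bar{\phi}_k$ first, since it is the easy half. From $\bar{\phi}_k=-g_{z,k}I_1(|k|)-\frac{g_{r,k}}{ik}\bigl(I_1(|k|)+|k|I_1'(|k|)\bigr)$ I only need the sizes $I_1(|k|)\thickapprox|k|^{-1/2}e^{|k|}$ and, via $I_1'(x)=x^{-1}I_1(x)+I_2(x)$ from \eqref{BP222}, $|k|I_1'(|k|)\thickapprox|k|^{1/2}e^{|k|}$. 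The prefactor $|k|^{-1}$ in front of $g_{r,k}$ then balances the extra power of $|k|$ produced by the derivative, so both contributions land at $|k|^{-1/2}e^{|k|}$, giving the first line of \eqref{vor7}.

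For $\bar{w}_k=D_k^{-1}\bigl(A_kg_{r,k}+B_kg_{z,k}-G_{k,F}\bigr)$ I would estimate the four ingredients in \eqref{abdg} separately. For $D_k=\int_1^\infty K_1(|k|s)s\fK_k(s)\,\md s$ the integrand is positive, so a genuine \emph{lower} bound is available: restricting to $s\in[1,2]$ and using the two-sided form of \eqref{bessel} (it is essential here that \eqref{bessel} is two-sided, not merely the one-sided Lemma \ref{lemfKfI}) gives integrand $\thickapprox|k|^{-1}e^{-2|k|s}$, whence $D_k\thickapprox|k|^{-2}e^{-2|k|}$ and $D_k^{-1}\lesssim|k|^{2}e^{2|k|}$. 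For $B_k=K_1(|k|)$ one has $|B_k|\lesssim|k|^{-1/2}e^{-|k|}$ directly. For $A_k$ the key is a cancellation: $I_1(|k|)K_1(|k|)=O(|k|^{-1})$ and $|k|I_1'(|k|)K_1(|k|)=O(1)$, so the bracket $I_1K_1+|k|I_1'K_1-1$ stays $O(1)$, while the prefactor $\frac{1}{ik}I_1^{-1}(|k|)\thickapprox|k|^{-1/2}e^{-|k|}$ yields $|A_k|\lesssim|k|^{-1/2}e^{-|k|}$. Together these give $D_k^{-1}(|A_k||g_{r,k}|+|B_k||g_{z,k}|)\lesssim|k|^{3/2}e^{|k|}(|g_{r,k}|+|g_{z,k}|)$, i.e. the first term of the $\bar{w}_k$ bound.

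The genuinely delicate ingredient is $G_{k,F}$, because $F_k=ik\bar f_{r,k}-\bar f_{z,k}'$ carries both a factor $|k|$ and a derivative of the data, whereas the target controls only $\|(\bar f_{r,k},\bar f_{z,k})\|_{L^\infty_{\tilde\la}}$. My plan is first to bound $h_{k,F}$ from \eqref{HKF}. The $ik\bar f_{r,k}$ piece is handled exactly as $\mathcal{L}_1,\mathcal{L}_\infty$ were in the proof of Proposition \ref{Prop2}: inserting \eqref{bessel7}, $|\bar f_{r,k}|\lesssim\|\bar f_{r,k}\|_{L^\infty_{\tilde\la}}s^{-\tilde\la}$ and the integral estimates \eqref{bessel2}, all exponentials cancel and the contribution is $\lesssim|k|^{-1}\|\bar f_{r,k}\|_{L^\infty_{\tilde\la}}r^{-\tilde\la}$. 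The $\bar f_{z,k}'$ piece must be integrated by parts inside $\int_1^r(\cdot)\fI_k$ and $\int_r^\infty(\cdot)\fK_k$, moving the derivative onto $s^{1-\nu}\fI_k$ and $s^{1-\nu}\fK_k$; the resulting derivatives are again controlled by \eqref{bessel7} (the worst power of $|k|$ comes from $\fI_k',\fK_k'$ but is compensated by \eqref{bessel2}), so the interior term and the $s=r$ boundary term also contribute $\lesssim|k|^{-1}\|\bar f_{z,k}\|_{L^\infty_{\tilde\la}}r^{-\tilde\la}$. The lone boundary term at $s=1$ produces a pure homogeneous piece $\bar f_{z,k}(1)\fI_k(1)\fK_k(r)$; crucially, when inserted into $G_{k,F}=\int_1^\infty K_1(|k|s)s\,h_{k,F}(s)\,\md s$ it reassembles as $\bar f_{z,k}(1)\fI_k(1)D_k$, of size $|k|^{-5/2}e^{-|k|}\|\bar f_{z,k}\|_{L^\infty_{\tilde\la}}$, and hence consistent. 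Integrating the remaining $r^{-\tilde\la}$ part of $h_{k,F}$ against $K_1(|k|s)s$ via \eqref{bessel} and \eqref{bessel2} gives the same size, so altogether $|G_{k,F}|\lesssim|k|^{-5/2}e^{-|k|}\|(\bar f_{r,k},\bar f_{z,k})\|_{L^\infty_{\tilde\la}}$. Multiplying by $D_k^{-1}\lesssim|k|^{2}e^{2|k|}$ produces the second term $|k|^{-1/2}e^{|k|}\|(\bar f_{r,k},\bar f_{z,k})\|_{L^\infty_{\tilde\la}}$, completing \eqref{vor7}. I expect the analysis of $G_{k,F}$ to be the main obstacle: one must track the $|k|$-powers through the integration by parts so that the derivative on the data costs no decay, and one must recognize the $s=1$ boundary term as $\bar f_{z,k}(1)\fI_k(1)D_k$ so that it cancels against $D_k^{-1}$ at exactly the right rate.
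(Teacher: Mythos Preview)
Your proposal is correct and follows the same route as the paper: read off $\bar\phi_k$ from \eqref{Sphik}$_1$ via \eqref{bessel}, and obtain $\bar w_k$ from \eqref{Ewk} by separately estimating $A_k,B_k,D_k,G_{k,F}$ exactly as in \eqref{vor008}--\eqref{vor8}. The one genuine point of difference is how $G_{k,F}$ is handled: the paper simply asserts the pointwise bound $|h_{k,F}(s)|\lesssim|k|^{-1}\|(\bar f_{r,k},\bar f_{z,k})\|_{L^\infty_{\tilde\la}}s^{-\tilde\la}$ ``by a similar approach'' to \eqref{linearvth3}--\eqref{linearvth4}, whereas you make the necessary integration by parts on the $\bar f_{z,k}'$ part of $F_k$ explicit and in particular isolate the $s=1$ boundary contribution $\bar f_{z,k}(1)\fI_k(1)\fK_k(r)$, recognizing that after integrating against $K_1(|k|s)s$ it becomes $\bar f_{z,k}(1)\fI_k(1)D_k$ and hence cancels at exactly the right rate against $D_k^{-1}$. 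This is a clean way to handle that term; the paper's pointwise claim on $h_{k,F}$ is also true (the boundary piece is $\lesssim|k|^{-1}r^{(\nu-1)/2}e^{|k|(1-r)}\lesssim_{\nu,\tilde\la}|k|^{-1}r^{-\tilde\la}$), but your identification with $D_k$ avoids having to verify that inequality. Either way the argument is the same in substance; note that the paper carries out precisely the integration by parts you describe a few lines later, in \eqref{vor1}, when estimating $w_k$ itself.
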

\begin{proof}
Recall \eqref{Sphik}$_1$,  the estimate \eqref{vor7}$_1$ is a direct corollary of \eqref{bessel}. Consider \eqref{HKF}. By a similar approach as we derive \eqref{linearvth3} and \eqref{linearvth4}, we have
\[
|h_{k,F}(s)|\ls |k|^{-1}\|(\bar{f}_{r,k},\bar{f}_{z,r})\|_{L^{\i}_{\tilde{\la}}}s^{-{\tilde{\la}}}\,.
\]
Then similarly we derive
\be\label{vor008}
|G_{k,F}|\ls |k|^{-\f{5}{2}}e^{-|k|}\|(\bar{f}_{r,k},\bar{f}_{z,r})\|_{L^{\i}_{\tilde{\la}}}.
\ee
Meanwhile, from the representations in \eqref{abdg} and using estimates in Lemma \ref{lemKI} and Lemma \ref{lemfKfI}, we have
\be\label{vor8}
|A_k|\ls |k|^{-\f{1}{2}}e^{-|k|},\q |B_k|\thickapprox |k|^{-\f{1}{2}}e^{-|k|},\q |D_k|\thickapprox_\nu |k|^{-2}e^{-2|k|}\,,
\ee
Using \eqref{vor008} and \eqref{vor8}, we infer from \eqref{Ewk} that
\[
|\bar{w}_k|\les_\nu k^2e^{2|k|}\left\{|k|^{-\f{1}{2}}e^{-|k|}\left(|g_{r,k}|+|g_{z,k}|\right)+|k|^{-\f{5}{2}}e^{-|k|} \|(\bar{f}_{r,k},\bar{f}_{z,r})\|_{L^{\i}_{\tilde{\la}}}\right\}\,.
\]
This proves \eqref{vor7}$_2$.
\end{proof}

\leftline{\emph{Proof of Proposition \ref{Prop3}.}}
Now we are on the way for the estimate of $w_k$.  Recall that
\[
\begin{split}
w_k(r)=&\bar{w}_k\mathfrak{K}_k(r)+\mathfrak{K}_k(r)\int_1^r\lt(ik\bar{f}_{r,k}(s)-\bar{f}'_{z,k}(s)\rt)s^{1-\nu}\mathfrak{I}_k(s)\md s\nn\\
      &+\mathfrak{I}_k(r)\int_r^{\i}\lt(ik\bar{f}_{r,k}(s)-\bar{f}'_{z,k}(s)\rt)s^{1-\nu}\mathfrak{K}_k(s)\md s\,.
\end{split}
\]
Direct integration by parts imply that
\begin{align}\label{vor1}
w_k(r)=&\bar{w}_k\mathfrak{K}_k(r)+\mathfrak{K}_k(r)\int_1^rik\bar{f}_{r,k}(s)s^{1-\nu}\mathfrak{I}_k(s)\md s+\mathfrak{I}_k(r)\int_r^{\i}ik\bar{f}_{r,k}(s)s^{1-\nu}\mathfrak{K}_k(s)\md s\nn\\
       &-\mathfrak{K}_k(r) s^{1-\nu}\mathfrak{I}_k(s)\bar{f}_{z,k}(s)\big|^r_1-\mathfrak{I}_k(r) s^{1-\nu}\mathfrak{K}_k(s)\bar{f}_{z,k}(s)\big|^{+\i}_r\nn\\
       &+\mathfrak{K}_k(r)\int_1^r \lt(s^{1-\nu}\mathfrak{I}_k(s)\rt)^\prime \bar{f}_{z,k}(s) \md s+\mathfrak{I}_k(r)\int_1^r \lt(s^{1-\nu}\mathfrak{K}_k(s)\rt)^\prime \bar{f}_{z,k}(s) \md s\nn\\
      =&\underbrace{\lt(\bar{w}_k+\mathfrak{I}_k(1)\bar{f}_{z,k}(1)\rt)\mathfrak{K}_k(r)}_{\mM_1}\nn\\
       &+\underbrace{\mathfrak{K}_k(r)\int_1^rik\bar{f}_{r,k}(s)s^{1-\nu}\mathfrak{I}_k(s)\md s}_{\mM_2}+\underbrace{\mathfrak{I}_k(r)\int_r^{\i}ik\bar{f}_{r,k}(s)s^{1-\nu}\mathfrak{K}_k(s)\md s}_{\mM_3}\nn\\
       &+\underbrace{\mathfrak{K}_k(r)\int_1^r \lt(s^{1-\nu}\mathfrak{I}_k(s)\rt)^\prime \bar{f}_{z,k}(s) \md s}_{\mM_4}+\underbrace{\mathfrak{I}_k(r)\int_1^r \lt(s^{1-\nu}\mathfrak{K}_k(s)\rt)^\prime \bar{f}_{z,k}(s) \md s}_{\mM_5}.
\end{align}
Using \eqref{bessel7}, we see that
\be\label{linearvor2}
|\mM^\prime_1(r)|+|k|\cd|\mM_1(r)|\ls_\nu \lt|\bar{w}_k+\mathfrak{I}_k(1)f_{z,k}(1)\rt| r^{\f{\nu-1}{2}} |k|^{\f{1}{2}}e^{-|k|r}.
\ee
By a similar approach as we derive \eqref{linearvth3} and \eqref{linearvth4}, we have
\be\label{linearvor3}
\lt|\mathcal{M}^\prime_j(r)\rt|+|k|\cd|\mathcal{M}_j(r)|\ls_{\nu,{\tilde{\la}}}\left\{
\begin{aligned}
&\|f_{r,k}\|_{L^\i_{\tilde{\la}}}r^{-{\tilde{\la}}}+|\mu|\cd\|v_{\th,k}\|_{L^{\i}_{\tilde{\la}}}r^{-{\tilde{\la}}-2}\,,\q&\text{for}\q j=2,3\,;\\
&\|f_{z,k}\|_{L^{\i}_{\tilde{\la}}}r^{-{\tilde{\la}}}\,,\q&\text{for}\q j=4,5\,.\\
\end{aligned}
\right.
\ee
Inserting \eqref{linearvor2} and \eqref{linearvor3} into \eqref{vor1}, we obtain that
\begin{align}
|w^\prime_k(r)|+|k|\cd|w_k(r)|\les_{\nu,{\tilde{\la}}}\|(\bar{f}_{r,k},\bar{f}_{z,r})\|_{L^{\i}_{\tilde{\la}}}r^{-{\tilde{\la}}}+ \lt|\bar{w}_k+\mathfrak{I}_k(1)f_{z,k}(1)\rt| r^{\f{\nu-1}{2}} |k|^{\f{1}{2}}e^{-|k|r}. \label{vor2}
\end{align}

For the estimates of $\phi_{k}$, from the representation \eqref{Sphi} and follow same approach as we achieve \eqref{linearvth7}, we can obtain that

\be\label{vor3}
|k|^3|\phi_{k}(r)|+|k|^2|\phi^\prime_{k}(r)|+|k|\cd|\phi^{\prime\prime}_{k}(r)|\ls_{\tilde{\la}} |k|\cd\|w_{k}\|_{L^{\i}_{\tilde{\la}}}r^{-{\tilde{\la}}}+ |\bar{\phi}_k| r^{\f{\nu-1}{2}} |k|^{\f{5}{2}}e^{-|k|r}.
\ee

Now we are ready for estimates of $v_{r,k}$ and $v_{z,k}$. From the relation between $\phi$ and $v_r,v_z$, we have
\be\label{vor4}
\left\{
\begin{aligned}
&v_{r,k}=-ik\phi_k\,,\\
&v^\prime_{r,k}=-ik\phi^\prime_k\,,\\
&v^{\prime\prime}_{r,k}=-ik\phi^{\prime\prime}_k\,,
\end{aligned}
\right.\q\q\q\q\left\{
\begin{aligned}
&v_{z,k}=\phi^\prime_k+ r^{-1}\phi_k\,,\\
&v^\prime_{z,k}=\phi^{\prime\prime}_k+ r^{-1}\phi^\prime_k-r^{-2}\phi_k\,,\\
&v^{\prime\prime}_{z,k}=-w^\prime_k+ikv^{\prime}_{r,k}=-w^\prime_k+k^2\phi^\prime_k\,.
\end{aligned}
\right.
\ee
Combining \eqref{vor3} and \eqref{vor4}, we see that
\be\label{vor5}
\begin{split}
&k^2|(v_{r,k},v_{z,k})(r)|+|k|\cd|(v^\prime_{r,k},v^\prime_{z,k})(r)|+|(v^{\prime\prime}_{r,k},v^{\prime\prime}_{z,k})(r)|\\
\ls_{\tilde{\la}}& |k|^3|\phi_{k}(r)|+|k|^2|\phi^\prime_{k}(r)|+|k|\cd|\phi^{\prime\prime}_{k}(r)|+|w^\prime_k(r)|\\
\ls_{\tilde{\la}}& |k|\cd\|w_{k}\|_{L^{\i}_{\tilde{\la}}}r^{-{\tilde{\la}}}+ |\bar{\phi}_k| r^{\f{\nu-1}{2}} |k|^{\f{5}{2}}e^{-|k|r}+|w^\prime_k(r)|\,.
\end{split}
\ee
Inserting estimates in \eqref{vor2} into \eqref{vor5}, we can obtain that
\begin{align}
&k^2|(v_{r,k},v_{z,k})(r)|+|k|\cd|(v^\prime_{r,k},v^\prime_{z,k})(r)|+|(v^{\prime\prime}_{r,k},v^{\prime\prime}_{z,k})(r)|\nn\\
\ls_{\nu,{\tilde{\la}}}\,&  |\bar{\phi}_k| r^{\f{\nu-1}{2}} |k|^{\f{5}{2}}e^{-|k|r}+\lt|\bar{w}_k+\mathfrak{I}_k(1)f_{z,k}(1)\rt| r^{\f{\nu-1}{2}} |k|^{\f{1}{2}}e^{-|k|r}+\|(\bar{f}_{r,k},\bar{f}_{z,r})\|_{L^{\i}_{\tilde{\la}}}r^{-{\tilde{\la}}}\nn\\
  \ls_{\nu,{\tilde{\la}}}\,&  |\bar{\phi}_k| r^{\f{\nu-1}{2}} |k|^{\f{5}{2}}e^{-|k|r}+\lt|\bar{w}_k\rt| r^{\f{\nu-1}{2}} |k|^{\f{1}{2}}e^{-|k|r}+\|(\bar{f}_{r,k},\bar{f}_{z,r})\|_{L^{\i}_{\tilde{\la}}}r^{-{\tilde{\la}}}. \label{vor6}
\end{align}
Inserting \eqref{vor7} into \eqref{vor6}, we finally conclude that
\begin{align}
&k^2|(v_{r,k},v_{z,k})(r)|+|k|\cd|(v^\prime_{r,k},v^\prime_{z,k})(r)|+|(v^{\prime\prime}_{r,k},v^{\prime\prime}_{z,k})(r)|\nn\\
  \ls&  k^{2}\lt(|g_{r,k}|+|g_{z,k}|\rt)e^{-r}+\|(\bar{f}_{r,k},\bar{f}_{z,r})\|_{L^{\i}_{\tilde{\la}}}r^{-{\tilde{\la}}}, \nn
\end{align}
which indicates \eqref{Mainvthk}. This complete the proof of Proposition \ref{Prop3}.
$\hfill\square$

}

\section{The nonlinear estimate and proof of the main result}\label{sec4}

\subsection{Existence}

In this section, we solve the nonlinear problem \eqref{ReduNS} by applying the fixed point theorem. Let
\[
\bar{\bl{v}}=\bar{v}_r(r,z)\bl{e_r}+\left(\bar{v}_\th(r,z)+\f{\bar{\sigma}}{r}{\bl{1}}_{-2\leq\nu<0}\right)\bl{e_\th}+\bar{v}_z(r,z)\bl{e_z}\in {\mathcal{B}}_\tau\,,
\]
where
\[
\tau=\min\left\{\bar{\la_\th}-3\,,\,\bar{\la_z}-2\,,\,\la-\f{3}{2}\right\}\,.
\]
Here $\bar{\sigma}\in\mathbb{R}$, $\bar{v}_\th=o(r^{-1})\,,$ as $r\to\i\,$. The definition of $\bar{\la_\th}$ and $\bar{\la_z}$ are given in \eqref{lath} and \eqref{laz}, respectively.

We consider the following linear system
\begin{small}
\be\label{PoNSvN}
\left\{\begin{split}
&-\left(\p_r^2+\f{1-\nu}{r}\p_r-\f{1-\nu}{r^2}+\p_z^2\right)v_r+\p_r\pi={\f{2\mu}{r^2}{v}_\th}+\bar{f}_r\,,\\
& -\left(\p_r^2+\f{1-\nu}{r}\p_r-\f{1+\nu}{r^2}+\p_z^2\right)v_\th=\bar{f}_\th\,,\\
&-\left(\p_r^2+\f{1-\nu}{r}\p_r+\p_z^2\right)v_z+\p_z\pi=\bar{f}_z\,,\\
&\p_z(r v_z)+\p_r(rv_r)=0\,,\\
&v_r\big|_{r=1}={g}_r,\q v_\th\big|_{r=1}= g_\th,\q v_z\big|_{r=1}= g_z, \q \bl{v}\big|_{r\rightarrow+\i}=0\,,
\end{split}\right.
\ee
\end{small}
where

\be\label{RHS}
\left\{
\begin{split}
&\bar{f}_r:=-\left(\bar{v}_r\p_r+{\bar{v}_z}\p_z\right)\bar{v}_r+\f{\bar{v}_\th^2}{r}+\f{2\bar{\sigma}\bar{v}_\th}{r^2}{\bl{1}}_{-2\leq\nu<0}+f_r\,;\\[1mm]
&\bar{f}_\th:=-\left(\bar{v}_r\p_r+{\bar{v}_z}\p_z\right)\bar{v}_\th-\f{\bar{v}_r\bar{v}_\th}{r}+f_\th\,;\\[1mm]
&\bar{f}_z:=-\left(\bar{v}_r\p_r+{\bar{v}_z}\p_z\right)\bar{v}_z+f_z\,.
\end{split}\right.
\ee
We mention here the extra term $(\f{\bar{\sigma}}{r})^2{\bl{1}}_{-2\leq\nu<0}$ is absorbed in $f_{r,0}$ which has no restriction.

Recall linear estimates the previous section, the system \eqref{ReduNS} has a unique solution $\bl{v}\in\mathcal{B}_\tau$. There we choose
\[
\t{\la_\th}=3+\tau\,,\q\t{\la}=\f{3}{2}+\tau\,,\q\t{\la_z}=2+\tau\,.
\]
Proposition \ref{Prop1} and Proposition \ref{Prop2} indicate $v_\th$ satisfies:
\be\label{nonlinear1}
\begin{split}
&|\sigma|{\bf1}_{-2<\nu<0}+\sum_{\ell\in\{0,1,2\}}\|\mathrm{v}^{(2-\ell)}(r)\|_{L^\i_{3+\tau-\ell}}+\sum_{k\neq0,\ell\in\{0,1,2\}}|k|^{2-\ell}\|v^{(\ell)}_{\th,k}(r)\|_{L^\i_{3/2+\tau}}\\
\ls_{\nu,{\la_\th},\la_z,{\la}}& \,\|\bar{f}_{\th,0}\|_{L^\i_{3+\tau}}+\sum_{k\neq 0}\|\bar{f}_{\th,k}\|_{L^\i_{3/2+\tau}}+ \sum_{k\in\bZ}(1+k^2)|g_{\th,k}|\,.
\end{split}
\ee
Moreover, by Proposition \ref{Prop3} and Proposition \ref{Prop4}, $(v_r,v_z)$ satisfies:
\be\label{nonlinear2}
\begin{split}
&\|v^{(2-\ell)}_{z,0}(r)\|_{L^\i_{2+\tau-\ell}}+\sum_{k\neq0\atop j\in\{r,z\},\ell\in\{0,1,2\}}|k|^{2-\ell}\|v^{(\ell)}_{j,k}(r)\|_{L^\i_{3/2+\tau}}\\
\ls_{\nu,{\la_\th},\la_z,{\la}} & \, \|\bar{f}_{z,0}\|_{L^\i_{2+\tau}}+\sum_{k\neq 0}\|(\bar{f}_{r,k},\bar{f}_{z,k})\|_{L^\i_{3/2+\tau}}+|\mu|\sum_{k\neq0}\|v_{\th,k}\|_{L^{\i}_{3/2+\tau}}+ \sum_{k\in\bZ}(1+k^2)(|g_{r,k}|+|g_{z,k}| )\,.
\end{split}
\ee
For further estimate of $\bl{\bar{f}}$, we need the following lemma
\begin{lemma}
The following lemma of $\bar{\bl{f}}$ hold:
\begin{align}
 &\|\bar{f}_{\th,0}\|_{L^\i_{3+\tau}}+\sum_{k\neq 0}\|\bar{f}_{\th,k}\|_{L^\i_{3/2+\tau}}\ls  \|\bl{\bar{v}}\|^2_{\mathcal{B}_{\tau}}+\|\bl{f}\|_{\mathcal{E}_{\la_\th,\la_z,\la}}\,, \label{nonlinear3}\\
 &\|\bar{f}_{z,0}\|_{L^\i_{2+\tau}}+\sum_{k\neq 0}\|(\bar{f}_{r,k},\bar{f}_{z,k})\|_{L^\i_{3/2+\tau}}\ls\|\bl{\bar{v}}\|^2_{\mathcal{B}_{\tau}}+\|\bl{f}\|_{\mathcal{E}_{\la_\th,\la_z,\la}}\,. \label{nonlinear4}
\end{align}
\end{lemma}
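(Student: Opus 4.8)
The plan is to expand each component of $\bar{\bl{f}}$ in \eqref{RHS} into a finite sum of bilinear terms of the shape $(\text{factor})\cdot(\text{factor})$, where each factor is one of $\bar{v}_j$, $\p_r\bar{v}_j$, a $\p_z$-derivative (which in Fourier mode $k$ is the multiplier $ik$), $\bar\sigma/r^2$, or $1/r$, and then to estimate each term in the appropriate weighted $L^\i_\zeta$ space mode by mode. Two elementary facts drive every estimate. First, the multiplicativity $\|ab\|_{L^\i_{\zeta_1+\zeta_2}}\le\|a\|_{L^\i_{\zeta_1}}\|b\|_{L^\i_{\zeta_2}}$, which is immediate from $r^{\zeta_1+\zeta_2}|ab|=\big(r^{\zeta_1}|a|\big)\big(r^{\zeta_2}|b|\big)$. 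Second, on the domain $r>1$ one has the monotonicity $\|h\|_{L^\i_\zeta}\le\|h\|_{L^\i_{\zeta'}}$ whenever $\zeta\le\zeta'$, so any product landing in a space with \emph{stronger} decay than the target is automatically controlled. Since products in $z$ become convolutions of Fourier coefficients, the summation over modes closes by Young's inequality $\|a*b\|_{\ell^1}\le\|a\|_{\ell^1}\|b\|_{\ell^1}$, after observing that $\|\bar{\bl{v}}\|_{\mathcal{B}_\tau}$ dominates each bare sum $\sum_{k\neq0}\|\bar{v}_{j,k}^{(\ell)}\|_{L^\i_{3/2+\tau}}$ because $|k|\ge1$ supplies the weights $|k|^{2-\ell}$ (in particular a $\p_z$-derivative costs one factor $|k|$, absorbed by the $|k|^2$ weight).

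For the zero-mode bounds (the $\|\bar f_{\th,0}\|_{L^\i_{3+\tau}}$ and $\|\bar f_{z,0}\|_{L^\i_{2+\tau}}$ terms), I would first exploit two structural cancellations. By Proposition \ref{Prop4} one has $\bar v_{r,0}=0$, and the operator $\p_z$ annihilates every zero mode, so in the convective zero modes $[\bar v_r\p_r\bar v_\th]_0$, $[\bar v_z\p_z\bar v_\th]_0$, $[\bar v_r\p_r\bar v_z]_0$ and $[\bar v_z\p_z\bar v_z]_0$ \emph{only nonzero$\times$nonzero interactions survive}. Consequently the slowly decaying swirl zero mode $\mathrm{v}\in L^\i_{1+\tau}$ and the scale-critical part $\bar\sigma/r$ never enter the forcing zero modes. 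Each surviving product then lies in $L^\i_{3+2\tau}$ for the $\th$-equation and in $L^\i_{2+2\tau}$ (or better) for the $z$-equation, weights exceeding the targets $3+\tau$ and $2+\tau$; the Fourier sums close by the Young/weight argument above, yielding $\les\|\bar{\bl{v}}\|^2_{\mathcal{B}_\tau}$.

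For the nonzero-mode bounds I would split each product into zero$\times$nonzero and nonzero$\times$nonzero interactions. The zero$\times$nonzero pieces are the only places where the slow factors $\mathrm{v}'\in L^\i_{2+\tau}$, $v_{z,0}'\in L^\i_{1+\tau}$, $\bar v_{z,0}\in L^\i_{\tau}$ and $\bar\sigma/r^2$ appear; pairing them with a nonzero mode in $L^\i_{3/2+\tau}$ always produces a weight at least $3/2+2\tau\ge 3/2+\tau$, so monotonicity on $r>1$ recovers the target, while any $|k|$ from a $\p_z$ is absorbed as above. The nonzero$\times$nonzero pieces, including the centripetal term $\bar v_\th^2/r$ and the term $\f{2\bar\sigma\bar v_\th}{r^2}\bl{1}_{-2\le\nu<0}$ in $\bar f_r$, give weights $\ge 3/2+\tau$ and close by Young's inequality. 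Finally the external force contributes $\|f_{\th,0}\|_{L^\i_{3+\tau}}$, $\|f_{z,0}\|_{L^\i_{2+\tau}}$ and $\sum_{k\neq0}\|(f_{r,k},f_{z,k},f_{\th,k})\|_{L^\i_{3/2+\tau}}$, absorbed into $\|\bl{f}\|_{\mathcal{E}_{\la_\th,\la_z,\la}}$ via $3+\tau\le\la_\th$, $2+\tau\le\la_z$, $3/2+\tau\le\la$ and the same monotonicity. The main obstacle is purely one of bookkeeping: correctly cataloguing which Fourier interactions occur in each mode and checking in every case that the summed weight dominates the target; the essential point is that the identities $\bar v_{r,0}=0$ and $(\p_z\,\cdot\,)_0=0$ keep the slowest (scale-critical) parts of the flow out of the dangerous slow$\times$slow interactions.
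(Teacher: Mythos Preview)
Your proposal is correct and follows essentially the same route as the paper: expand each component of $\bar{\bl{f}}$ as a Fourier convolution, observe that in the zero modes the $\ell=0$ contributions vanish (so only nonzero$\times$nonzero interactions remain), and close the $k\neq0$ sums by Young's inequality together with the multiplicativity and monotonicity of the weighted $L^\i_\zeta$ norms on $r>1$. One small attribution point: the fact $\bar v_{r,0}=0$ is not a consequence of Proposition~\ref{Prop4} (which concerns the \emph{output} of the linear map), but is built into the space $\mathcal{B}^0_\tau$ itself, whose zero mode carries only $\bl{e_\th}$ and $\bl{e_z}$ components; otherwise your bookkeeping matches the paper's.
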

\begin{proof}
Recall \eqref{RHS}$_2$. Using integration by parts with $z-$variable, and noticing the divergence-free property of $\bl{v}$, one derives:
\begin{align}
&\bar{f}_{\th,0}=-\f{1}{2\pi}\int_0^{2\pi}\left((\bar{v}_r\p_r\bar{v}_\th+\bar{v}_z\p_z\bar{v}_\th)+\f{\bar{v}_r\bar{v}_\th}{r}\right)\md z+f_{\th,0}\nn\\
=&-\sum\limits_{\ell\neq0} \lt(\bar{v}_{r,\ell} \p_r\bar{v}_{\th,-\ell}+\bar{v}_{z,\ell}i\ell\bar{v}_{\th,-\ell} +\f{\bar{v}_{r,\ell}}{r}\bar{v}_{\th,-\ell}\rt)+f_{\th,0}\,.\nn
\end{align}
This indicate that
\begin{align}
\|\bar{f}_{\th,0}\|_{L^\i_{3+\tau}}\ls\Big(\sum_{\ell\neq 0} |\ell|\cd\|\bar{v}_{r,\ell}, \bar{v}_{z,\ell} \|_{L^\i_{3/2+\tau}}\Big)\Big(\sum_{\ell\neq0}\|\bar{v}_{\th,\ell}, \p_r\bar{v}_{\th,\ell} \|_{L^\i_{3/2+\tau}}\Big)+\|{f}_{\th,0}\|_{L^\i_{3+\tau}}\,. \label{app1}
\end{align}
While for $k\neq 0$,
\begin{align}
\bar{f}_{\th,k}=-\sum_{\ell\in\mathbb{Z}} \lt(\bar{v}_{r,k-\ell} \p_r \bar{v}_{\th,\ell}+\bar{v}_{z,k-\ell}i\ell \bar{v}_{\th,\ell}+\f{\bar{v}_{r,k-\ell} \bar{v}_{\th,\ell}}{r}\rt)+{f}_{\th,k}\,.\nn
\end{align}
Using the Young inequality for convolution, we have
\begin{align}
\sum_{k\neq0}\|\bar{f}_{\th,k}\|_{L^\i_{3/2+\tau}}\ls \Big(\sum_{k\neq0}\| (\bar{v}_{r,k},kv_{\th,k})\|_{L^\i_{3/2+\tau}}\Big)\Big(\sum_{k\in\bZ}\| (\p_r\bar{v}_{\th,k},v_{z,k},\f{\bar{v}_{\th,k}}{r})\|_{L^\i}\Big)+ \sum_{k\neq0}\|{f}_{\th,k}\|_{L^\i_{3/2+\tau}}\,.\label{app2}
\end{align}
Thus \eqref{app1} and \eqref{app2} indicate \eqref{nonlinear3}. Meanwhile
\begin{align}
\bar{f}_{z,0}=-\sum_{\ell\neq0} \lt(\bar{v}_{r,-\ell} \p_r \bar{v}_{z,\ell}+\bar{v}_{z,-\ell} i\ell \bar{v}_{z,\ell}\rt)+{f}_{z,0}\,.\nn
\end{align}
Then we have
\begin{align}
\|\bar{f}_{z,0}\|_{L^\i_{2+\tau}}\ls \Big(\sum_{\ell\neq0}\| (\bar{v}_{r,\ell},kv_{z,\ell})\|_{L^\i_{3/2+\tau}}\Big)\Big(\sum_{\ell\in\mathbb{Z}}\| (\p_r\bar{v}_{z,\ell},v_{z,\ell})\|_{L^\i_{1/2}}\Big)+\|{f}_{z,0}\|_{L^\i_{2+\tau}}. \label{app3}
\end{align}
While for $k\neq0$,
\begin{align}
\bar{f}_{z,k}=-\sum_{\ell\in\mathbb{Z}} \lt(\bar{v}_{r,k-\ell} \p_r \bar{v}_{z,\ell}+\bar{v}_{z,k-\ell} \ell \bar{v}_{z,\ell}\rt)+{f}_{z,k}\,.\nn
\end{align}
Similarly as \eqref{app2}, we have
\begin{align}
\sum_{k\neq0}\|\bar{f}_{z,k}\|_{L^\i_{3/2+\tau}}\ls\Big(\sum_{k\neq0}\| (\bar{v}_{r,k},k \bar{v}_{z,k})\|_{L^\i_{3/2+\tau}}\Big)\Big(\sum_{k\in\bZ}\| (\p_r\bar{v}_{z,k},\bar{v}_{z,k})\|_{L^\i}\Big) + \sum_{k\neq0}\|{f}_{z,k}\|_{L^\i_{3/2+\tau}}. \label{app5}
\end{align}
The term $\bar{f}_{r,0}$ can be absorbed in the zero mode of pressure, one only consider $\bar{f}_{r,k}$ for $k\neq 0$. Clearly
\begin{align}
\bar{f}_{r,k}=-\sum_{\ell\in\mathbb{Z}} \lt(\bar{v}_{r,k-\ell} \p_r \bar{v}_{r,\ell}+\bar{v}_{z,k-\ell} i\ell \bar{v}_{r,\ell}-\f{\bar{v}_{\th,k-\ell} \bar{v}_{\th,\ell}}{r}\rt)+\f{2\bar{\sigma}\bar{v}_{\th,k}}{r^2}{\bl{1}}_{-2\leq\nu<0}+{f}_{r,k}\,.\nn
\end{align}
Then we have
\begin{align}
\sum_{k\neq0}\|\bar{f}_{r,k}\|_{L^\i_{3/2+\tau}}\ls& \Big(\sum_{k\neq0}\| (\bar{v}_{r,k},k \bar{v}_{r,k}, \bar{v}_{\th,k})\|_{L^\i_{3/2+\tau}}\Big)\Big(\sum_{k\in\bZ}\| (\p_r\bar{v}_{r,k},\bar{v}_{z,k},\bar{v}_{\th,k})\|_{L^\i}\Big)\nn\\
&+{\bl{1}}_{-2\leq\nu<0}|\bar{\sigma}|\sum_{k\neq0}\|\bar{v}_{\th,k}\|_{L^\i_{3/2+\tau}} + \sum_{k\neq0}\|{f}_{r,k}\|_{L^\i_{3/2+\tau}}\,. \label{app4}
\end{align}
Thus \eqref{app3}, \eqref{app4} and \eqref{app5} indicate \eqref{nonlinear4}. This finishes the proof of the lemma.
\end{proof}

By calculating
\[
\eqref{nonlinear1}+\dl\times\eqref{nonlinear2}
\]
for some small $0<\dl<1$, then using \eqref{nonlinear3} and \eqref{nonlinear4}, we obtain that

\be\label{ZZGJ1}
\begin{split}
|\sigma|{\bf1}_{-2<\nu<0}&+\sum_{\ell\in\{0,1,2\}}\|\mathrm{v}^{(2-\ell)}(r)\|_{L^\i_{3+\tau-\ell}}+\sum_{k\neq0,\ell\in\{0,1,2\}}|k|^{2-\ell}\|v^{(\ell)}_{\th,k}(r)\|_{L^\i_{3/2+\tau}}\\
&+\dl\,\Big(\|v^{(2-\ell)}_{z,0}(r)\|_{L^\i_{2+\tau-\ell}}+\sum_{k\neq0\atop j\in\{r,z\},\ell\in\{0,1,2\}}|k|^{2-\ell}\|v^{(\ell)}_{j,k}(r)\|_{L^\i_{3/2+\tau}}\Big)\\
\ls_{\nu,{\la_\th},\la_z,{\la}} \|\bl{\bar{v}}\|^2_{\mathcal{B}_{\tau}}&+\dl|\mu|\sum_{k\neq0}\|v_{\th,k}\|_{L^{\i}_{3/2+\tau}}+\|\bl{f}\|_{\mathcal{E}_{\la_\th,\la_z,\la}}+\|\bl{g}\|_{\mathcal{V}}\,.
\end{split}
\ee
Choosing suitably small $\dl$, we infer from \eqref{ZZGJ1} that there exists a constant $C>0$ which may depend on $\nu$, $\mu$, $\la_\th$, $\la_z$ and $\la$, such that
\be\label{nonlinear9}
\|\bl{{v}}\|_{\mathcal{B}_{\tau}}\leq C_{\nu,\mu,\la_\th,\la_z,\la} \left(\|\bl{\bar{v}}\|^2_{\mathcal{B}_{\tau}}+\|\bl{f}\|_{\mathcal{E}_{\la_\th,\la_z,\la}}+\|\bl{g}\|_{\mathcal{V}}\right)\,.
\ee

In the following, we omit the lower indexes of the constant $C$ for the convenience. The above process in this section indicates that:

\begin{framed}
\emph{Given any $\bar{\bl{v}}\in{\mathcal{B}_{\tau}}$ with $\|\bar{\bl{v}}\|_{{\mathcal{B}_{\tau}}}<2C\e$, and $(\bl{f},\bl{g})\in\mathcal{E}_{\la_\th,\la_z,\la}\times\mathcal{V}$ such that
\[
\|\bl{f}\|_{\mathcal{E}_{\la_\th,\la_z,\la}}+\|\bl{g}\|_{\mathcal{V}}<\e\,,
\]
where $\e<(4C)^{-2}$, there exists a unique solution $\bl{v}$ of \eqref{PoNSvN} that satisfies
\[
\|\bl{v}\|_{{\mathcal{B}_{\tau}}}\leq C\left(4C^2\e^2+\e\right)<2C\e\,.
\]}
\end{framed}

Therefore, the solution map
\[
\mathrm{T}\q:\q\bar{\bl{v}}\q\to\q\bl{v}
\]
arise from problem \eqref{PoNSvN} maps the ball
 \[
\mathbf{B}_{2C\e}:=\{\bl{v}\in{\mathcal{B}_{\tau}}\,:\,\|\bl{v}\|_{{\mathcal{B}_{\tau}}}<2C\e\}
 \]
 to itself.

 On the other hand, given any distinct
 \[
 \bar{\bl{v}}_1\,,\,\bar{\bl{v}}_2\in \mathbf{B}_{2C\e}\,,
 \]
the same estimate as obtain \eqref{nonlinear9} indicates that
\be\label{nlin10}
\begin{split}
\|\mathrm{T}\bar{\bl{v}}_1-\mathrm{T}\bar{\bl{v}}_2\|_{{\mathcal{B}_{\tau}}}\leq C\left(\|\bar{\bl{v}}_1\|_{\mathcal{B}_\tau}+\|\bar{\bl{v}}_2\|_{{\mathcal{B}_{\tau}}}\right)\|\bar{\bl{v}}_1-\bar{\bl{v}}_2\|_{{\mathcal{B}_{\tau}}}\leq 4C^2\e\|\bar{\bl{v}}_1-\bar{\bl{v}}_2\|_{{\mathcal{B}_{\tau}}}\,.
\end{split}
\ee
Here the constant $C$ may be different with that in \eqref{nonlinear9}, but we can still unify them by choosing the larger one in both inequalities. Thus \eqref{nlin10} shows the operator $\mathrm{T}$ is a contract mapping since $\e<(4C^2)^{-1}$. The \emph{Banach fixed point theorem} indicates the equation
\[
\mathrm{T}\,\bl{v}=\bl{v}
\]
has \emph{a unique solution} in $\mathbf{B}_{2C\e}$. And it satisfies
\[
\|\bl{v}\|_{{\mathcal{B}_{\tau}}}\leq C\left(\|\bl{f}\|_{\mathcal{E}_{\la_\th,\la_z,\la}}+\|\bl{g}\|_{\mathcal{V}}\right)\,.
\]
This completes the existence proof of the main result.

\subsection{Non-uniqueness of the case for $\nu<-2$ }
from the existence proof of Theorem \ref{Main} in the case of $\nu<-2$, we can choose $\t{\mu}$, which is different but sufficiently close to $\mu$, such that
\[
\t{\bl{u}}=\f{\nu}{ r}\bl{e}_r+\f{\t{\mu}}{r}\bl{e}_\th+\t{\bl{v}},\q \t{\bl{v}}\in \mathcal{B}_{\tau}\,,
\]

solves the reduced problem
\[
\left\{\begin{split}
&-\left(\p_r^2+\f{1-\nu}{r}\p_r-\f{1-\nu}{r^2}+\p_z^2\right)\t{v}_r+\p_r\t{\pi}=-\left(\t{v}_r\p_r+{\t{v}_z}\p_z\right)\t{v}_r+\f{\t{v}_\th^2}{r}+{\f{2\t{\mu}}{r^2}\t{v}_\th}+f_r\,,\\
& -\left(\p_r^2+\f{1-\nu}{r}\p_r-\f{1+\nu}{r^2}+\p_z^2\right)\t{v}_\th=-\left(\t{v}_r\p_r+{\t{v}_z}\p_z\right)\t{v}_\th-\f{\t{v}_r\t{v}_\th}{r}+f_\th\,,\\
&-\left(\p_r^2+\f{1-\nu}{r}\p_r+\p_z^2\right)\t{v}_z+\p_z\t{\pi}=-\left(\t{v}_r\p_r+{\t{v}_z}\p_z\right)\t{v}_z+f_z\,,\\
&\p_z(r \t{v}_z)+\p_r(r\t{v}_r)=0\,,\\
&\t{v}_r\big|_{r=1}={g}_r,\q \t{v}_\th\big|_{r=1}= g_\th-\t{\mu}+\mu,\q \t{v}_z\big|_{r=1}= g_z, \q \t{\bl{v}}\big|_{r\rightarrow+\i}=0\,.
\end{split}\right.
\]
 is still a solution of system \eqref{ns0}. Since $\bl{v},\,\t{\bl{v}}\in\mathcal{B}_\tau$, we have
  \[
  v_\th\,,\,\t{v}_\th\in o(r^{-1})\,,\q\text{as}\q r\to\i\,.
  \]
Thus we find $\bl{u}$ and $\t{\bl{u}}$ are indeed two distinct solutions of \eqref{ns0} since their $r^{-1}$ coefficients are different. $\hfill\square$

\section*{Data availability statement}
Data sharing is not applicable to this article as no datasets were generated or analysed during the current study.

\section*{Conflict of interest statement}
The authors declare that they have no conflict of interest.
\section*{Acknowledgments}
\q Z. Li is supported by the China Postdoctoral Science Foundation (No. 2024M763474) and the National Natural Science Foundation of China (No. 12001285). X. Pan is supported by the National Natural Science Foundation of China (No. 12031006, No. 12471222).

\end{document}